\numberwithin{equation}{section}
\newtheorem{theorem}{Theorem}[section]
\newtheorem{corollary}[theorem]{Corollary}
\newtheorem{lemma}[theorem]{Lemma}
\theoremstyle{definition}
\newtheorem{remark}[theorem]{Remark}
\newtheorem*{theorem*}{Theorem}
\renewcommand{\dim}{\textup{dim}}
\renewcommand{\ker}{\textup{ker}}
\newcommand{\D}{\slashed{D}}
\newcommand{\SO}{\textup{SO}}
\newcommand{\Spin}{\textup{Spin}}
\begin{document}
\title{Minimal kernels of Dirac operators along maps}
\author{Johannes Wittmann}
\maketitle
\begin{abstract}
Let $M$ be a closed spin manifold and let $N$ be a closed manifold. For maps $f\colon M\to N$ and Riemannian metrics $g$ on $M$ and $h$ on $N$, we consider the Dirac operator $\D^f_{g,h}$ of the twisted Dirac bundle $\Sigma M\otimes_{\mathbb{R}} f^*TN$. To this Dirac operator one can associate an index in $KO^{-\textup{dim}(M)}(\textup{pt})$. If $M$ is $2$-dimensional, one gets a lower bound for the dimension of the kernel of $\D^f_{g,h}$ out of this index. We investigate the question whether this lower bound is obtained for generic tupels $(f,g,h)$.
\end{abstract}
\tableofcontents

\newpage
\section{Introduction}
Let $M$ be a closed (i.e., compact and without boundary) $2$-dimensional spin manifold with a fixed spin structure and let $N$ be a closed manifold. We study the existence and genericness\footnote{The term ``generic'' will be defined rigorously in Remark \ref{rmk}.} of maps $f\colon M\to N$ and Riemannian metrics $g$ on $M$ and $h$ on $N$, such that the kernel of the Dirac operator $\D^f_{g,h}$ of the twisted Dirac bundle $\Sigma M\otimes_{\mathbb{R}}f^*TN$ has quaternionic dimension zero or one. Here, $\Sigma M$ is the usual complex spinor bundle of $M$ and $\D^f_{g,h}$ is called the \textit{Dirac operator along the map $f$}.  

This problem is inherently tied to the vanishing of an index $\textup{ind}_{f^*TN}(M)\in KO^{-\textup{dim}(M)}(\textup{pt})$, see e.g. \cite[eq. (7.24) on p. 151]{LaMi}, which is a generalization of Hitchin's $\alpha$-index \cite[Section 4.2]{Hitchin}. If $M$ is $2$-dimensional, then we have
\[\textup{ind}_{f^*TN}(M)=\left[\dim_\mathbb{H}\ker \D^{f}_{g,h} \right]_{\mathbb{Z}_2} \]
under the isomorphism $KO^{-2}(\textup{pt})\cong \mathbb{Z}_2:=\mathbb{Z}/2\mathbb{Z}$, where $[k]_{\mathbb{Z}_2}$ denotes the class of $k\in\mathbb{Z}$ in $\mathbb{Z}_2$. Note that $\textup{ind}_{f^*TN}(M)$ is independent of the choice of the Riemannian metrics on $M$ and $N$. It is also invariant under homotopies of $f$. However, the index depends on the choice of spin structure on $M$. This means in particular, that
\begin{align}\label{eq1}
	\dim_\mathbb{H}\ker \D^{f}_{g,h}\ge\begin{cases}
	1, &\text{if }\textup{ind}_{\tilde{f}^*TN}(M)\neq 0,\\
	0,& \text{if }\textup{ind}_{\tilde{f}^*TN}(M)= 0,
	\end{cases}
\end{align}
for any $f\colon M \to N$ homotopic to $\tilde{f}$ and any Riemannian metric $g$ on $M$ and $h$ on $N$.

If equality holds in \eqref{eq1}, then we call the kernel of $\D^{f}_{g,h}$ \textit{minimal}. We expect that for generic tupels $(f,g,h)$ the kernel of $\D^{f}_{g,h}$ is minimal. In an investigation for similar results (references are given in the next section) it turned out that often the strategy to prove such a result is the following: for a given $\tilde{f}\colon M\to N$ one has to find a map $f\colon M\to N$ homotopic to $\tilde{f}$ and Riemannian metrics $g$ on $M$ and $h$ on $N$ such that the kernel of $\D^f_{g,h}$ is minimal. Genericness then follows from well known perturbation results. Finding ``enough'' examples for minimal kernels therefore seems to be the crucial part in proving the expectation.

Our first main theorem addresses the existence of tupels $(f,g,h)$ such that the kernel of $\D^f_{g,h}$ is $1$-dimensional, c.f. Theorem \ref{thm 1}. (Examples with $0$-dimensional kernels are easy to construct.) In particular we show that if $\alpha(M)\neq 0$ (we denote by $\alpha(M)$ Hitchin's $\alpha$-index), $N$ is odd-dimensional and orientable, and $\tilde{f}\colon M\to N$ is null-homotopic, then there exists a map $f\colon M\to N$ homotopic to $\tilde{f}$ and Riemannian metrics $g$ on $M$ and $h$ on $N$ s.t. \[\dim_\mathbb{H}\ker\D^f_{g,h}=1.\] Our second main theorem addresses the genericness of minimal kernels, c.f. Theorem \ref{thm 2}.

\subsection{Motivation}
Our motivation to study this problem is twofold. 

On the one hand, there are many results in the literature concerning the genericness of minimal kernels under the presence of an index. In \cite{AmmDahHum} it is shown that for generic metrics, the dimension of the kernel of the (untwisted) Dirac operator is as small as allowed by the index theorem of Atiyah and Singer (on a closed, connected manifold). This fact generalized results in \cite{BaerDahl} and \cite{Maier}. In the latter article the author also considers $\textup{spin}^c$-manifolds. The dependency of the kernel of the twisted Dirac operator, where one twists with hermitian vector bundles, is considered in \cite{Anghel}. Note that we twist with real vector bundles. This is one of the reasons why we were not able to apply the variational approach of \cite{Anghel} and \cite{Maier} to our situation. Another article related to such problems is \cite{Hitchin}.

On the other hand, the existence of maps $f$ with $\dim_{\mathbb{H}}\ker \D^f_{g,h}=1$ has a concrete application to the theory of Dirac-harmonic maps. Dirac-harmonic maps are the critical points of the supersymmetric analog of the classical Dirichlet energy functional. The supersymmetric analog is the underlying functional for the supersymmetric non-linear sigma model in quantum field theory, see e.g. \cite{Dhmaps}, \cite{Dhmapsregt}, \cite[Chapter 10]{RiemGeomandGeomAna}, and \cite[Part 1, Supersolutions, Chapter 3]{super}. The existence of maps $f\colon M\to N$ such that the kernel of $\D^f_{g,h}$ is $1$-dimensional is needed in order that the so-called heat flow for Dirac-harmonic maps, introduced in \cite{ChenJostSunZhu} for manifolds with non-empty boundary, is also well-posed on closed manifolds, c.f. \cite{JW}. 

\subsection*{Acknowledgments}The author would like to thank Bernd Ammann for his ongoing support and many fruitful discussions. The author's work was supported by the DFG Graduiertenkolleg GRK 1692 ``Curvature, Cycles, and Cohomology''.

\section{Notation and preliminaries from spin geometry} 
In this section we introduce notation and recall some basics from spin geometry which will be relevant in the following, e.g. for understanding the precise meaning of our main theorems. For a more detailed introduction to spin geometry we refer to e.g. \cite{LaMi}, \cite{Neu}, \cite{Hij}, \cite{Fri}, and \cite{Roe}

Let $M$ be an oriented $m$-dimensional manifold and denote by $\textup{GL}^+M$ the $GL^+(m)$-principal bundle of oriented frames for $M$. Moreover, we denote by $\theta\colon \widetilde{GL}^+(m)\rightarrow GL^+(m)$ the universal covering for $m\ge 3$ and the connected twofold covering for $m=2$. A \textit{topological spin structure on $M$} is a $\theta$-reduction of $\textup{GL}^+M$, i.e., a topological spin structure on $M$ is a $\widetilde{GL}^+(m)$-principal bundle $\widetilde{\textup{GL}}^+M$ over $M$ together with a twofold covering $\chi\colon\widetilde{\textup{GL}}^+M\rightarrow\textup{GL}^+M$ that commutes with the projections onto $M$ and is compatible with the group actions of the principal bundles. 

Now let $(M,g)$ be an oriented Riemannian manifold and $\textup{SO}(M,g)$ the $SO(m)$-principal bundle of oriented orthonormal frames for $M$. Restricting $\theta$ to the \textit{spin group} given by $\textup{Spin}(m):=\theta^{-1}(SO(m))$, we define a \textit{metric spin structure on $(M,g)$} to be a $\theta|_{\textup{Spin}(m)}$-reduction of $\textup{SO}(M,g)$. Again, this means that a metric spin structure on $M$ is a $\textup{Spin}(m)$-principal bundle $\textup{Spin}(M,g)$ over $M$ together with a twofold covering $\eta\colon \textup{Spin}(M,g)\rightarrow \textup{SO}(M,g)$ that commutes with the projections onto $M$ and is compatible with the group actions of the principal bundles.

Given a topological spin structure $\chi\colon\widetilde{\textup{GL}}^+M\rightarrow\textup{GL}^+M$ on an oriented manifold $M$, every Riemannian metric $g$ on $M$ defines a metric spin structure $$\chi_g\colon \textup{Spin}(M,g)\rightarrow \textup{SO}(M,g)$$ on $(M,g)$ by $\textup{Spin}(M,g):=\widetilde{\textup{GL}}^+M|_{\textup{SO}(M,g)}$. In the following, the term \textit{spin structure} refers to a topological or metric spin structure and it should always be clear from the context which one we mean. A \textit{spin manifold} is an oriented manifold that admits a spin structure. 

On a Riemannian manifold $(M,g)$ with metric spin structure $\eta$, we have the usual Dirac operator $\D=\D_\eta\colon\Gamma(\Sigma M)\to\Gamma(\Sigma M)$ acting on sections of the complex spinor bundle $\Sigma M$. If we are given a map $f\colon M\to N$, where $(N,h)$ is a Riemannian manifold, we define the \textit{Dirac operator along $f$} 
\[\D^f_{g,h}=\D^f_{\eta,h}\colon \Gamma(\Sigma M\otimes_{\mathbb{R}}f^*TN)\to \Gamma(\Sigma M\otimes_{\mathbb{R}}f^*TN)\]
to be the Dirac operator of the twisted Dirac bundle $\Sigma M\otimes_{\mathbb{R}}f^*TN$. In the notation for $\D^f_{g,h}=\D^f_{\eta,h}$ we highlight either the metric $g$ on $M$ or the spin structure $\eta$ on $M$ in the notation, depending on the context. Locally,
\[\slashed{D}_{\eta,h}^f\psi= (\slashed{D}_\eta\psi^i)\otimes s_i + (e_\alpha\cdot \psi^i)\otimes\nabla^{f^*TN}_{e_\alpha}s_i\]
where $\psi=\psi^i\otimes s_i$, the $\psi^i$ are local sections of $\Sigma M$, $(s_i)$ is a local frame of $f^*TN$, $(e_\alpha)$ is a local orthonormal frame of $TM$, and $\nabla^{f^*TN}$ is the pullback of the Levi-Civita connection on $(N,h)$.

\section{Statement of the results}
In this section we state our main results about the existence and genericness of minimal kernels for Dirac operators along maps. We only consider manifolds that are non-empty and smooth.

\begin{theorem}\label{thm 1}Let $M$ be a $2$-dimensional closed manifold and $N$ be a $n$-dimensional closed manifold. Moreover, assume that
	\begin{itemize}
	\item $M$ is connected, oriented, and of positive genus.
	\item $N$ is connected. If $n$ is even, then we assume that $N$ is non-orientable.
	\end{itemize}
Then the following holds:\\

\textbf{Case $n=2$:}\\
Let $h$ be a Riemannian metric on $N$. Then there exists a spin structure $\chi$ on $M$ and a smooth map $f\colon M\to N$ s.t.
\[\dim_\mathbb{H}\ker\D^f_{\chi_g,h}=1\]
for generic Riemannian metrics $g$ on $M$.\\

\textbf{Case $n\ge 2$:}\\
There exists a spin structure $\chi$ on $M$, a smooth map $f\colon M\to N$, and a Riemannian metric $h$ on $N$ s.t.
\[\dim_\mathbb{H}\ker\D^f_{\chi_g,h}=1\]
for generic Riemannian metrics $g$ on $M$.
\end{theorem}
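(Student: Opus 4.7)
The plan is to follow the two-step pattern outlined in the introduction: first exhibit one concrete triple $(f,g_0,h)$ realizing $\dim_{\mathbb H}\ker\D^{f}_{\chi_{g_0},h}=1$, and then upgrade this to a statement about generic $g$ by a standard transversality argument in the space of Riemannian metrics on $M$, in the spirit of \cite{AmmDahHum} but adapted to the twisted setting. I expect the existence step to contain essentially all of the work.

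For the existence step, the first move is to choose the spin structure $\chi$ on $M$ so that $\alpha(M)\neq 0$; such a $\chi$ exists on every positive-genus surface (for instance the Lie-group spin structure on $T^2$, transported to higher genus by connected sum). One then needs a homotopy class of maps for which the index $\textup{ind}_{f^*TN}(M)\in\mathbb{Z}_2$ is nontrivial, since by \eqref{eq1} that alone already forces $\dim_{\mathbb H}\ker\ge 1$. For a null-homotopic $f$ the bundle $f^*TN$ is trivial and the index equals $n\,\alpha(M)\bmod 2$, which is nonzero precisely when $n$ is odd. For even $n$, the hypothesis that $N$ is non-orientable becomes decisive: a map $f$ that detects the first Stiefel--Whitney class of $TN$ turns $f^*TN$ into a nontrivial real bundle whose mod-$2$ KO-index is again nontrivial. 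In either sub-case, however, the first candidate one writes down (e.g.\ the constant map, for which $\D^{f}_{\chi_g,h}$ splits as $n$ copies of $\D_{\chi_g}$) produces a kernel of quaternionic dimension much larger than~$1$.

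The heart of the proof is therefore to reduce this oversized kernel down to exactly one quaternionic dimension. Even when $f^*TN$ is trivial as a bundle, the pullback connection $\nabla^{f^*TN}$ depends genuinely on $f$ through the local formula given in Section~2, and in the general $n\ge 2$ case one may additionally vary $h$; these parameters should provide enough freedom to move quaternionic eigenvalues of $\D^{f}_{\chi_{g},h}$ away from zero. Since $\textup{ind}_{f^*TN}(M)\in\mathbb{Z}_2$ is homotopy invariant and coincides with $\dim_{\mathbb H}\ker\bmod 2$, this parity is preserved throughout any continuous deformation, and its non-vanishing keeps the dimension from collapsing below~$1$. A natural concrete route is to let $f$ factor through a small circle or embedded submanifold of $N$ so that $f^*TN$ splits as a direct sum of line bundles and trivial factors; the problem then reduces to controlling finitely many line-bundle-twisted Dirac operators on $M$, for which explicit spectral information is available on flat tori and on warped-product metrics.

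The main obstacle, I expect, is constructing explicit one-parameter deformations of $(f,g,h)$ that realize exactly the required eigenvalue crossings—pushing pairs of zero modes out of the kernel without simultaneously annihilating the surviving element guaranteed by the index. This is most delicate in the case $n=2$, where $h$ is prescribed from the outset and only $f$ and (later)~$g$ are at our disposal; there I would try to exploit the conformal covariance of $\D$ in dimension two together with the complex structure that $h$ induces on $f^*TN$ to reduce the analysis to an explicit model on an embedded Riemann surface inside $N$. Once such a base example with kernel dimension exactly $1$ is in place, the ``generic $g$'' conclusion should follow from a transversality argument in the space of metrics on $M$, exactly as in \cite{AmmDahHum} for the untwisted Dirac operator.
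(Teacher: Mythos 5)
Your high-level geometric setup is promising and in fact matches the paper's choice: you propose to make $f$ factor through an embedded circle so that $f^*TN$ splits into a trivial factor plus real line bundles, and you correctly recognize that the central difficulty is preventing the kernel from being oversized. But the proposal stops exactly at the point where the paper supplies its key mechanism, and the ideas you offer to fill that gap would not close it.

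The missing ingredient is the \emph{difference of spin structures}. The paper does not take an arbitrary map $M\to S^1$ and then try to ``push zero modes out'' by deforming parameters. Instead it fixes two spin structures $\chi^1,\chi^2$ with, generically, $\dim_\mathbb{H}\ker\D_{\chi^1_g}=1$ and $\dim_\mathbb{H}\ker\D_{\chi^2_g}=0$ (Lemma~\ref{lemma 1}, via \cite{AmmDahHum}), and then builds $f\colon M\to S^1$ from the difference class $\delta_{\chi^1_g,\chi^2_g}$ (Lemma~\ref{lemma induced map}). The whole point, encoded in Lemma~\ref{lemma ident bdls}, is that twisting $\Sigma^{\chi^1}M$ by the pullback along this particular $f$ of the M\"obius bundle produces a bundle \emph{isomorphic, Dirac-operator and all, to} $\Sigma^{\chi^2}M$. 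Combined with a geodesic $\gamma\colon S^1\to N$ whose normal bundle has been arranged (by modifying $h$ near $\gamma$ so the holonomy is $-I_{n-1}$) to split into M\"obius factors, one obtains the exact decomposition $\D^{\gamma\circ f}_{\chi^1_g,h}\cong\D_{\chi^1_g}\oplus\D_{\chi^2_g}^{\oplus(n-1)}$, hence kernel of dimension $1+0+\cdots+0=1$. None of this appears in your proposal; ``explicit spectral information on flat tori'' or ``conformal covariance and complex structures'' are not substitutes, since the theorem is for arbitrary positive-genus $M$ and arbitrary prescribed $h$ in the $n=2$ case, and neither suggestion produces the needed identification of twisted and untwisted spinor bundles.

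A second, related point: you plan to obtain the ``generic $g$'' conclusion by a separate transversality argument in $\textup{Riem}(M)$ for the \emph{twisted} operator, ``exactly as in \cite{AmmDahHum}.'' The paper explicitly avoids this (the twist is by a real bundle, and the author states in the introduction that the variational approaches of \cite{Anghel} and \cite{Maier} do not transfer). Instead, genericity falls out automatically: the set $\mathcal{G}$ of metrics for which $\dim\ker\D_{\chi^1_g}=1$ and $\dim\ker\D_{\chi^2_g}=0$ is dense and open by \cite{AmmDahHum}, and the decomposition above then gives $\dim_\mathbb{H}\ker\D^{f}_{\chi^1_g,h}=1$ for every $g\in\mathcal{G}$ with the \emph{same} $f$ and $h$. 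So you would need to either find and prove the missing lemma relating the twisted bundle to the second spinor bundle, or develop the transversality machinery for real-bundle twists from scratch; the latter is a substantially harder route and is not what the theorem requires.
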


\begin{remark}\ \label{rmk}
	 \begin{enumerate}
	 	\item By ``generic'' we mean $C^\infty$-dense and $C^1$-open. More precisely, a statement $S=S(g)$ holds for generic Riemannian metrics $g$ on $M$, if there exists a subset $\mathcal{G}\subset \textup{Riem}(M)$ of the space of Riemannian metrics on $M$ which is dense in the $C^\infty$-topology, open in the $C^1$-topology, and $S(g)$ is true for every $g\in\mathcal{G}$.
	 	\item The case $n=1$ was not mentioned in the theorem, since for $1$-dimensional $N$ it is not difficult to find examples for $1$-dimensional kernels. If we choose a spin structure on $M$ for which the Dirac operator on $\Sigma M$ has $1$-dimensional kernel and $f$ to be a constant map, then the kernel of $\D^f_{g,h}$ is $1$-dimensional.
	 	\item If $N$ is $2$-dimensional and orientable, or more general even dimensional and spin, then $\textup{ind}_{f^*TN}(M)$ always vanishes \cite[Proposition 10.1]{AmmGin}, hence in this case the kernel of $\D^f_{g,h}$ is never $1$-dimensional.
	 	\item The above theorem gives information about the existence of minimal kernels if $\textup{ind}_{f^*TN}(M)$ does not vanish. If $\textup{ind}_{f^*TN}(M)$ vanishes, examples of minimal kernels are easy to construct. Just take a spin structure on $M$ for which the Dirac operator on $\Sigma M$ has zero dimensional kernel and twist with a constant map.
	 	\item The proof of Theorem \ref{thm 1} is constructive. We will use differences of spin structures to construct maps $M\to S^1$ and then use certain closed geodesics $S^1\to N$ s.t. the composition $M\to S^1\to N$ is the desired map $f$. 	 
	 \end{enumerate}
\end{remark}

From the proof of Theorem \ref{thm 1} we get the following corollary.
\begin{corollary}\label{cormainthm1} Let $M$ be a $2$-dimensional closed connected spin manifold with $\alpha(M)\neq 0$ and let $N$ be an odd-dimensional orientable closed connected manifold. Let $\tilde{f}\colon M\to N$ be null-homotopic. Then there exists a map $f\colon M\to N$ homotopic to $\tilde{f}$ and Riemannian metrics $g$ on $M$ and $h$ on $N$ s.t.
	\[\dim_\mathbb{H}\ker\D^f_{g,h}=1.\]
\end{corollary}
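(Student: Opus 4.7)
Since $\tilde f$ is null-homotopic, $\tilde{f}^*TN$ is trivial of rank $n$, so $\textup{ind}_{\tilde{f}^*TN}(M)=n\cdot\alpha(M)=\alpha(M)\neq 0$ in $\mathbb{Z}_2$ (as $n$ is odd). Hence \eqref{eq1} yields $\dim_{\mathbb{H}}\ker\D^f_{g,h}\geq 1$ for every $f$ homotopic to $\tilde f$, and the corollary asks for equality. The plan is to reuse the construction underlying Theorem \ref{thm 1}: take $f:=\gamma\circ u$, where $u\colon M\to S^1$ encodes a difference of spin structures and $\gamma\colon S^1\to N$ is an auxiliary closed curve. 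The key point for this corollary is to choose $\gamma$ null-homotopic in $N$, so that $f$ is automatically null-homotopic and thus homotopic to $\tilde f$.

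\textbf{Construction of $(h,\gamma,u)$.} I would first pick a Riemannian metric $h$ on $N$ together with a smooth null-homotopic closed geodesic $\gamma\colon S^1\hookrightarrow N$ such that the holonomy of $\gamma^*\nabla^{TN}$ around $S^1$ is $\SO(n)$-conjugate to $-I_{n-1}\oplus 1$. Concretely, place $\gamma$ inside a contractible ball $B\subset N$ and shape the curvature of $h$ in a tubular neighbourhood of $\gamma$; since $-I_{n-1}\oplus 1$ lies in the identity component $\SO(n)$, a continuous family of metrics starting from the flat product model on the tube sweeps through this holonomy class, after which a further small perturbation keeps $\gamma$ a closed geodesic. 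The corresponding parallel eigenspace decomposition of $\gamma^*TN$ is a trivial line summand $\oplus$ a rank-$(n-1)$ bundle isomorphic, as a flat real bundle, to the direct sum of $n-1$ copies of the Möbius line bundle $\ell$ over $S^1$. Since $\alpha(M)\ne 0$ forces $\mathrm{genus}(M)\ge 1$, the Arf invariant takes both values on the $H^1(M;\mathbb{Z}_2)$-torsor of spin structures on $M$; pick $c\in H^1(M;\mathbb{Z}_2)$ with $\alpha(\chi\cdot c)=0$, where $\chi$ is the given spin structure. Surjectivity of $H^1(M;\mathbb{Z})\to H^1(M;\mathbb{Z}_2)$ on the closed oriented surface $M$ yields a smooth map $u\colon M\to S^1$ with $u^*w=c$, $w$ generating $H^1(S^1;\mathbb{Z}_2)$. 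Set $f:=\gamma\circ u$.

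\textbf{Splitting and conclusion.} Pulling the parallel splitting of $\gamma^*TN$ back along $u$ and invoking the standard identification $\Sigma_\chi M\otimes_{\mathbb{R}}u^*\ell\cong \Sigma_{\chi\cdot c}M$ (twisting a spinor bundle by a flat real line bundle with $\mathbb{Z}_2$-monodromy class $c$ shifts the spin structure by $c$), one obtains an isomorphism of Dirac bundles
\[
\Sigma_\chi M\otimes_{\mathbb{R}}f^*TN\;\cong\;\Sigma_\chi M\;\oplus\;\underbrace{\Sigma_{\chi\cdot c}M\oplus\cdots\oplus\Sigma_{\chi\cdot c}M}_{n-1\text{ summands}},
\]
and hence $\D^f_{g,h}$ is unitarily equivalent to $\D_\chi\oplus\D_{\chi\cdot c}\oplus\cdots\oplus\D_{\chi\cdot c}$ with $n-1$ copies of $\D_{\chi\cdot c}$. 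By \cite{AmmDahHum}, for generic $g$ each untwisted Dirac operator on $M$ realises the lower bound from its index: $\dim_{\mathbb{H}}\ker\D_\psi=\alpha(\psi)\in\{0,1\}$. Fixing any such generic $g$ gives
\[
\dim_{\mathbb{H}}\ker\D^f_{g,h}=\alpha(\chi)+(n-1)\alpha(\chi\cdot c)=1+(n-1)\cdot 0=1,
\]
which is the desired conclusion. The main obstacle I expect is the geometric input in the first step: constructing a null-homotopic closed geodesic with prescribed holonomy $-I_{n-1}\oplus 1$, which has to be verified via an explicit local model of $h$ on the tubular neighbourhood and a check that the geodesic and holonomy conditions can be met simultaneously.
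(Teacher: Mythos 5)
Your proposal is correct and follows essentially the same route as the paper's: the corollary is obtained by running Case~2 ($n$ odd, $N$ orientable) of the proof of Theorem~\ref{thm 1} with the extra observation that one may take the auxiliary simple closed geodesic $\gamma$ to be \emph{null-homotopic}, so that $f=\gamma\circ f_\delta$ factors through $S^1$ and is therefore null-homotopic as required. The index computation $\textup{ind}_{\tilde f^*TN}(M)=n\cdot\alpha(M)=\alpha(M)$, the choice of a second spin structure $\chi\cdot c$ with $\alpha(\chi\cdot c)=0$, the splitting of $\Sigma M\otimes f^*TN$ into $\Sigma_\chi M\oplus(\Sigma_{\chi\cdot c}M)^{\oplus(n-1)}$ via Lemma~\ref{lemma ident bdls}, and the appeal to \cite{AmmDahHum} all match the paper's argument. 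The one step you flag as incomplete --- realizing holonomy $-I_{n-1}\oplus 1$ along a null-homotopic embedded geodesic by a ``sweeping'' family of metrics --- is exactly what the paper handles explicitly by identifying the tubular neighbourhood $U_\varepsilon$ with the model quotient $T_{-I_{n-1}}$ and pushing forward the quotient of the flat product metric, which makes $\gamma$ a geodesic with the prescribed normal holonomy in one stroke and then extends to $N$ by a partition of unity; this is the clean replacement for your heuristic deformation argument.
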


The next theorem addresses the genericness of minimal kernels, assuming their existence.
\begin{theorem}\label{thm 2}
Let $M$ be a $2$-dimensional closed spin manifold with spin structure $\chi$ and let $N$ be an $n$-dimensional closed manifold. Assume that the kernel of $\D^{f}_{\chi_g,h}$ is minimal for some smooth map $f\colon M\to N$ and some Riemannian metrics $g$ on $M$ and $h$ on $N$.

Then the following holds:
	\begin{enumerate}
		\item For generic Riemannian metrics $\tilde{h}$ on $N$ the kernel of $\D ^{f}_{\chi_{g},\tilde{h}}$ is minimal.
		\item For generic Riemannian metrics $\tilde{g}$ on $M$ the kernel of $\D ^{f}_{\chi_{\tilde{g}},h}$ is minimal.
		\item If $h$ is a real analytic Riemannian metric (and $N$ is real analytic), then the kernel of $\D^{\tilde{f}}_{\chi_{g},h}$ is minimal for generic $\tilde{f}\in[f]$, i.e., for a $C^\infty$-dense and $C^1$-open subset of $[f]$. (Here and in the following, $[f]$ denotes the homotopy class of $f\colon M\to N$.)
	\end{enumerate}
\end{theorem}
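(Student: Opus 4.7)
The plan is to combine upper semi-continuity of kernel dimension (which yields $C^1$-openness) with a first-order perturbation argument (which yields $C^\infty$-density). Openness is immediate in all three parts: the operator $\D^{f}_{\chi_g,h}$ depends continuously in the $C^1$-topology on its parameters, since only first derivatives of the metrics enter the coefficients, and the dimension of the kernel of a self-adjoint elliptic operator is upper semi-continuous in such parameters. Combined with the index lower bound \eqref{eq1}, this forces the set of parameters yielding a minimal kernel to be $C^1$-open.

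For density in parts (i) and (ii), I would use the following standard mechanism. Let $K := \ker \D^{f}_{\chi_{g_0},h_0}$ and suppose $\dim_\mathbb{H} K$ exceeds the minimum. For a smooth one-parameter family $h_t$ (respectively $g_t$), Kato's perturbation theory produces smooth eigenvalue branches $\lambda_j(t)$ emanating from zero whose first derivatives $\dot\lambda_j(0)$ are the eigenvalues of the symmetric bilinear form $Q(\phi,\psi) := \langle \dot{\D}|_{t=0}\phi,\psi \rangle_{L^2}$ on $K$. Hence it suffices, given any nonzero $\phi \in K$, to exhibit a perturbation direction $\dot h$ (resp.\ $\dot g$) for which $Q(\phi,\phi) \neq 0$; at least one eigenvalue then strictly leaves zero for small $t>0$, the kernel strictly shrinks, and iterating reaches the minimum. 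The main technical tool here is the unique continuation property for Dirac operators (Aronszajn--Cordes): no nonzero $\phi \in K$ vanishes on an open subset of $M$, so local perturbations $\dot h$ supported in small balls of $N$ (and pulled back to $M$ via $f$) generate a rich enough family of variations. For part (ii) one first applies the Bourguignon--Gauduchon identification to transport $\D^{f}_{\chi_g,h}$ onto a fixed Hilbert space, and then uses the first-order variation formulas of the Dirac operator with respect to $g$ developed in \cite{BaerDahl, AmmDahHum}.

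Part (iii) requires a different route because $\tilde f^* TN$ itself varies with $\tilde f$. I would parametrize perturbations of a given $\tilde f_0 \in [f]$ by small sections $X \in \Gamma(\tilde f_0^* TN)$ via $\tilde f_X(p) := \exp^h_{\tilde f_0(p)}(X(p))$, and identify $\tilde f_X^* TN$ with $\tilde f_0^* TN$ through parallel transport along the radial geodesics. This turns $\D^{\tilde f_X}_{\chi_g,h}$ into a family of self-adjoint elliptic operators on the fixed bundle $\Sigma M \otimes_\mathbb{R} \tilde f_0^* TN$. The real analyticity hypothesis enters precisely here: the radial parallel transport solves a real analytic ODE, so the operator family depends real analytically on $X$. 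Kato's theory for self-adjoint analytic families then yields real analytic eigenvalue branches along any real analytic path in $X$, and such branches are either identically zero or have isolated zeros. Combined with the hypothesis, which supplies $f$ itself as a minimal point, a propagation-of-analyticity argument along paths within $[f]$ rules out that such a branch can remain at zero, giving the desired density of minimal $\tilde f$.

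The step I expect to be hardest is verifying the nontriviality of the first-order form $Q$ on $K$ in parts (i) and (ii). As stressed in the introduction, twisting by the \emph{real} bundle $f^*TN$ prevents a direct application of the hermitian variational arguments of \cite{Anghel, Maier}; one must compute the variation of $\D^f_{\chi_g,h}$ explicitly (tracking both the change in the spinorial connection for variations of $g$ and the change in the pullback Levi--Civita connection for variations of $h$) and then combine the resulting formula with unique continuation to produce a nontrivial form on an arbitrary finite-dimensional subspace $K$.
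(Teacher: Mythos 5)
Your openness argument (upper semi-continuity of $\dim\ker$ in the $C^1$-topology, combined with the index lower bound \eqref{eq1}) matches the paper and is fine. Your approach to part (iii) is also close in spirit: you parametrize nearby maps by sections $X\in\Gamma(\tilde f_0^*TN)$ via $\exp$ and conjugate by radial parallel transport to get an analytic family on a fixed bundle; the paper does the same thing with the one-parameter path $f_t(x)=\exp_{\tilde f(x)}\bigl(t\exp_{\tilde f(x)}^{-1}f(x)\bigr)$ and then globalizes by covering a homotopy path with a finite chain of $C^0$-balls of radius $<\tfrac12\mathrm{inj}(N)$. Your ``propagation of analyticity'' phrase is vague where the paper is concrete, but the mechanism (real analyticity of $h$, ODE analyticity of parallel transport, Maier's discreteness result for analytic self-adjoint families) is the same.

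For density in parts (i) and (ii), however, there is a genuine gap, and it is exactly the one you flag as ``hardest''. You propose to show that the first-order form $Q(\phi,\psi)=\langle\dot{\D}\phi,\psi\rangle_{L^2}$ can be made nonzero on the kernel via unique continuation and local perturbations, and to iterate the kernel down to the minimum. This is the Anghel/Maier variational mechanism, which the paper explicitly states in the introduction it was \emph{not} able to apply here, precisely because the twist bundle $f^*TN$ is a \emph{real} (and in the $2$-dimensional case quaternionically structured) bundle rather than a hermitian one. Nothing in your sketch addresses this obstruction: you acknowledge the difficulty but do not resolve it, and the paper's remark is strong evidence that a direct variational computation of $Q$ is not available. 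The paper circumvents the issue entirely by \emph{not} attempting a local first-variation argument. Instead it uses the hypothesis that the kernel is minimal at $(f,g,h)$ as an anchor: given any target metric $\tilde h$, consider the linear path $h_t=th+(1-t)\tilde h$; this is a real analytic family of self-adjoint operators, so by Maier's Proposition~11.4 the set of $t$ with non-minimal kernel is finite. Since $t=1$ gives the assumed minimal kernel (which also realizes the lower bound \eqref{eq1}), minimality is the generic value along the path, hence attained for $t$ arbitrarily close to $0$. This gives $C^\infty$-density of the set of good metrics near any $\tilde h$ without ever computing $\dot{\D}$, and it is both simpler and actually works. Part (ii) is the same after a Bourguignon--Gauduchon identification of spinor bundles, as in Maier. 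You should adopt this ``transport from a known good point along an analytic path'' strategy rather than the first-order perturbation strategy.
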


\section{Differences of spin structures}
In this section we consider differences of spin structures. These are also treated in \cite{DissAmm, Neu} and they are one of the main tools we use to construct the maps $f$ of Theorem \ref{thm 1}.

In this section, we let $M$ be a $m$-dimensional connected spin manifold. Assume we are given a Riemannian metric $g$ on $M$ and two spin structures $\eta^i\colon \Spin(M,g)^i\to\SO(M,g)$, $i=1,2$. 

Then we define the group homomorphism (c.f. \cite[p. 15]{DissAmm})
\begin{align*}
	\delta=\delta_{\eta^1,\eta^2}\colon \pi_1(\SO(M,g))&\to\mathbb{Z}_2,\\
	 [\gamma]&\mapsto\begin{cases}
	1,  & \text{if either } \gamma \text{ lifts to }\textup{Spin}(M,g)^1 \text{ and }\textup{Spin}(M,g)^2\\
	& \text{or it lifts to none of them.}\\
	-1, & \text{if } \gamma \text{ lifts either to }\textup{Spin}(M,g)^1 \text{ or to }\textup{Spin}(M,g)^2.
	\end{cases}
\end{align*}
We call $\delta$ \textit{the difference of the spin structures $\eta^1$ and $\eta^2$}.

The name originates from the following: Let 
\begin{align*}
	\mathcal{M}&:=\{\textup{spin structures on }(M,g)\} / \text{equivalence},\\
	\mathcal{S}&:=\{\textup{index}\le 2\text{ subgroups of }\pi_1(\SO(M,g))\},
\end{align*}
and consider the maps
\begin{align*}
	\Psi\colon \mathcal{M}\to\mathcal{S},\qquad \big(\eta\colon\Spin(M,g)\to\SO(M,g)\big)\mapsto \eta_*(\pi_1(\Spin(M,g))),
\end{align*}
\begin{align*}
\Omega\colon \mathcal{S}\to \textup{Hom}(\pi_1(\SO(M,g)),\mathbb{Z}_2), \qquad H\mapsto \Omega(H),
\end{align*}
where the group homomorphism $\Omega(H)$ is defined by $\ker(\Omega(H))=H$. Then it holds that
\[\delta=\Omega(\Psi(\eta^1))-\Omega(\Psi(\eta^2)).\]
In particular, we have shown the following lemma.
\begin{lemma}
	If $\eta_1$ and $\eta_2$ are not equivalent, then $\delta_{\eta^1,\eta^2}$ is surjective.
\end{lemma}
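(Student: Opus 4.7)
The plan is to argue by contraposition: assume $\delta_{\eta^1,\eta^2}$ is not surjective, and deduce that $\eta^1$ and $\eta^2$ are equivalent. Since the target $\mathbb{Z}_2$ has only two elements and $\delta$ is a group homomorphism, non-surjectivity is the same as $\delta$ being the trivial homomorphism. Using the factorization $\delta = \Omega(\Psi(\eta^1)) - \Omega(\Psi(\eta^2))$ recorded just before the lemma, the vanishing of $\delta$ is equivalent to $\Omega(\Psi(\eta^1)) = \Omega(\Psi(\eta^2))$.

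Next, I would record that $\Omega$ is injective, essentially by construction: an index-$\le 2$ subgroup $H\le\pi_1(\SO(M,g))$ is recovered from $\Omega(H)\in\textup{Hom}(\pi_1(\SO(M,g)),\mathbb{Z}_2)$ as its kernel. Hence $\Omega(\Psi(\eta^1)) = \Omega(\Psi(\eta^2))$ forces $\Psi(\eta^1)=\Psi(\eta^2)$, i.e.\ the two spin structures induce the same subgroup of $\pi_1(\SO(M,g))$.

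The remaining (and main) step is to verify that $\Psi$ itself is injective, i.e.\ equality of the induced subgroups implies equivalence of the spin structures. This is a standard consequence of covering space theory: connected twofold coverings of $\SO(M,g)$ are classified up to covering isomorphism by their index-$2$ subgroups of $\pi_1(\SO(M,g))$, and the disconnected case corresponds to the whole group. The genuine content here is that the equivalence of spin structures requires an isomorphism of $\Spin(m)$-principal bundles compatible with the projections to $\SO(M,g)$, not merely a covering isomorphism. The main obstacle is thus to promote the covering isomorphism supplied by the classification theorem to a $\Spin(m)$-equivariant map. This is arranged by observing that the $\Spin(m)$-action on $\Spin(M,g)^i$ is uniquely determined by the requirement of lifting the $\SO(m)$-action on $\SO(M,g)$ along the twofold covering, so any covering isomorphism automatically intertwines the two actions (up to composing with a deck transformation if necessary).

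With injectivity of both $\Omega$ and $\Psi$ in hand, the contrapositive chain $\delta\equiv 1\Rightarrow \Psi(\eta^1)=\Psi(\eta^2)\Rightarrow \eta^1\sim\eta^2$ is complete, establishing the lemma.
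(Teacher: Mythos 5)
Your proposal is correct and follows the same route as the paper: the paper records the factorization $\delta = \Omega(\Psi(\eta^1)) - \Omega(\Psi(\eta^2))$ and concludes the lemma "in particular," implicitly invoking exactly the facts you spell out — injectivity of $\Omega$ (by construction, a homomorphism to $\mathbb{Z}_2$ is determined by its kernel), injectivity of $\Psi$ (the classification of spin structures up to equivalence by index-$\le 2$ subgroups of $\pi_1(\SO(M,g))$), and that a non-trivial homomorphism to $\mathbb{Z}_2$ is surjective. Your contrapositive phrasing and the extra paragraph on why a covering isomorphism is automatically $\Spin(m)$-equivariant (via connectedness of $\Spin(m)$ for $m\ge 2$ and uniqueness of lifts) make explicit what the paper leaves to the reader, but the underlying argument is the same.
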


In the next lemma we show that $\delta$ descends to a group homomorphism $\pi_1(M)\to\mathbb{Z}_2$.
\begin{lemma}\label{lemma descend}There exists a unique group homomorphism $\overline{\delta}\colon\pi_1(M)\to\mathbb{Z}_2$ s.t. the following diagram commutes
	\begin{align*}
		    \xymatrix{
			\pi_1(\SO(M,g)) \ar[dr]_\delta \ar[rr]& &\pi_1(M)\ar[dl]^{\overline{\delta}}\\
			        &\mathbb{Z}_2 &  }
	\end{align*}
	where the horizontal map is induced by the bundle projection $\SO(M,g)\to M$.
\end{lemma}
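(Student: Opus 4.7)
The plan is to exploit the fact that the induced map $p_*\colon \pi_1(\SO(M,g))\to\pi_1(M)$ is surjective, so that descent reduces to showing $\ker(p_*)\subseteq \ker(\delta)$; uniqueness and the homomorphism property of $\overline{\delta}$ will then be automatic. Surjectivity of $p_*$ follows from the long exact sequence of homotopy groups associated to the principal bundle $\SO(m)\to \SO(M,g)\to M$, using that $\pi_0(\SO(m))=0$. The same long exact sequence identifies $\ker(p_*)$ as the image of the fiber inclusion $i_*\colon \pi_1(\SO(m))\to \pi_1(\SO(M,g))$, so every element of $\ker(p_*)$ is represented (up to free homotopy, and after conjugation) by a loop contained in a single fiber $\SO(M,g)_x$ for some $x\in M$.

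Next I would note that the definition of $\delta$ is manifestly homotopy invariant: whether a loop $\gamma$ lifts to $\Spin(M,g)^i$ depends only on whether $[\gamma]\in\eta^i_*(\pi_1(\Spin(M,g)^i))$, a condition on homotopy classes. So it suffices to evaluate $\delta$ on a loop $\gamma$ entirely contained in one fiber $\SO(M,g)_x$. Over $x$, each metric spin structure $\eta^i$ restricts to a $\Spin(m)$-principal bundle $\Spin(M,g)^i_x$ together with a twofold covering onto $\SO(M,g)_x$ which is $\Spin(m)$-equivariant; such a covering is, after choosing a base point in $\Spin(M,g)^i_x$, canonically identified with the standard covering $\theta|_{\Spin(m)}\colon \Spin(m)\to\SO(m)$. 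Therefore whether $\gamma$ admits a lift in the fiber of $\eta^i$ depends only on the class of $\gamma$ in $\pi_1(\SO(m))\cong\mathbb{Z}_2$ (it lifts iff that class is trivial), and in particular this criterion is the same for $i=1$ and $i=2$.

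Consequently, for any loop $\gamma$ in a single fiber, $\gamma$ lifts either to both spin structures or to neither, so $\delta([\gamma])=1$. Extending by homotopy invariance and the functoriality $i_*$, we conclude $\delta|_{\ker(p_*)}=1$. By the universal property of quotients, together with surjectivity of $p_*$, this gives a unique group homomorphism $\overline{\delta}\colon \pi_1(M)\to\mathbb{Z}_2$ making the stated triangle commute.

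The only real subtlety is step two: one must make sure that the identification of $\ker(p_*)$ with loops in a single fiber is used in a way compatible with the definition of $\delta$, which is phrased in terms of lifts of loops in the total space rather than loops in the structure group. The argument above handles this by invoking homotopy invariance of the lifting condition, so that the fiber-loop reduction is legitimate and the comparison with the standard covering $\theta|_{\Spin(m)}$ becomes the key input. Everything else is formal.
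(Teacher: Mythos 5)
Your proof is correct and takes essentially the same route as the paper: both reduce via the long exact sequence of the fibration $\SO(m)\to\SO(M,g)\to M$ to loops coming from $\pi_1(\SO(m))$, and then observe that the restriction of each $\eta^i$ to a fiber is the standard covering $\theta|_{\Spin(m)}\colon\Spin(m)\to\SO(m)$, so the lifting criterion is independent of $i$. One small slip: for $m=2$ one has $\pi_1(\SO(2))\cong\mathbb{Z}$, and a loop lifts through $\theta|_{\Spin(2)}$ iff its class is \emph{even}, not trivial; this does not affect the conclusion since the criterion is still manifestly the same for $i=1$ and $i=2$.
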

\begin{proof}
	There exists an exact sequence
	\begin{align*}
	\ldots\to\pi_1(\SO(m)))\xrightarrow{\iota_*}\pi_1(\SO(M,g))\twoheadrightarrow \pi_1(M)\to \underbrace{\pi_0(\SO(m))}_{=\{1\}}\to\ldots
	\end{align*}
	Hence it suffices to show that for every $[\gamma]\in\pi_1(\SO(m))$ we have $[\iota\circ \gamma]\in\ker(\delta)$. This directly follows from the commutative diagram 
		\begin{align*}
	\xymatrix{
		&\Spin(m) \ar@{^{(}->}[r] \ar[d] & \Spin(M,g)^i\ar[d]^{\eta^i}\\
		S^1\ar[r]^\gamma&\SO(m)\ar@{^{(}->}[r]^\iota&\SO(M,g)  }
	\end{align*}
\end{proof}

\subsection{Relation to spin structures and spinor bundles}\label{section relation}
Let $\overline{\delta}=\overline{\delta}_{\eta^1,\eta^2}\colon \pi_1(M)\to\mathbb{Z}_2$ be the group homomorphism of Lemma \ref{lemma descend}. Assume that $\eta^1$ and $\eta^2$ are not equivalent. Then $\overline{\delta}$ is surjective and hence $\ker(\overline{\delta})\subset\pi_1(M)$ is an index $2$ subgroup of $\pi_1(M)$. We denote by 
\[p\colon P\to M\]
the connected twofold covering with $p_*(\pi_1(P))=\ker(\overline{\delta})$.
\begin{lemma}\label{lemma identi spinstr} There exists an isomorphism of $\Spin(m)$-principal bundles
	\begin{align*}
		F\colon \Spin(M,g)^1\times_{\mathbb{Z}_2}P\to \Spin(M,g)^2
	\end{align*}
	where $\mathbb{Z}_2=\ker(\Spin(m)\to\SO(m))$ acts on $\Spin(M,g)^1$ from the right and $\mathbb{Z}_2$ acts on $P$ from the left, and it holds that
	\begin{align}\label{eq 1}
		\eta^2(F([a,e]))=\eta^1(a)
	\end{align}
	for all $a\in\Spin(M,g)^1$, $e\in P$.
\end{lemma}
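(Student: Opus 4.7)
My plan is to construct $F$ indirectly: I will first exhibit the left-hand side as a spin structure on $(M,g)$ in its own right, then show it is equivalent to $\eta^2$, and finally read $F$ off from this equivalence.

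First I define a candidate projection
\[\tilde\eta\colon\Spin(M,g)^1\times_{\mathbb{Z}_2}P\to\SO(M,g),\qquad [a,e]\mapsto\eta^1(a).\]
This is well-defined on $\mathbb{Z}_2$-orbits because the non-trivial element $-1\in\mathbb{Z}_2\subset\Spin(m)$ projects to the identity in $SO(m)$. A fiber count (four pairs over each $s\in\SO(M,g)_x$ modulo the diagonal $\mathbb{Z}_2$-action give two) shows that $\tilde\eta$ is a twofold cover, and compatibility with the $\Spin(m)$-action via the first factor is formal, so $\tilde\eta$ is a metric spin structure on $(M,g)$.

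Next I compute its class under $\Psi$. Write $\Omega^i:=\Omega(\Psi(\eta^i))$ and $\tilde\Omega:=\Omega(\Psi(\tilde\eta))$, viewed as homomorphisms $\pi_1(\SO(M,g))\to\mathbb{Z}_2$. For a loop $\gamma$ I pick continuous lifts $a\colon[0,1]\to\Spin(M,g)^1$ of $\gamma$ and $e\colon[0,1]\to P$ of its projection $\gamma_M$ to $M$; by construction, whether $a$ and $e$ close up is recorded precisely by $\Omega^1([\gamma])$ and by $\bar\delta([\gamma_M])$, respectively. The path $[a(t),e(t)]$ is a lift of $\gamma$ to $\tilde\eta$, and the diagonal identification $(a,e)\sim(-a,-e)$ shows that it closes up in the quotient if and only if $a$ and $e$ either both close or both fail to close. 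Hence
\[\tilde\Omega=\Omega^1-\bar\delta\circ\pi_*=\Omega^1-\delta=\Omega^2,\]
where the second equality uses Lemma~\ref{lemma descend} (with $\pi_*\colon\pi_1(\SO(M,g))\to\pi_1(M)$ induced by the bundle projection) and the last uses the identity $\delta=\Omega(\Psi(\eta^1))-\Omega(\Psi(\eta^2))$ recalled above.

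Since $\Psi(\tilde\eta)=\ker\tilde\Omega=\ker\Omega^2=\Psi(\eta^2)$, the spin structures $\tilde\eta$ and $\eta^2$ lie in the same equivalence class, so there is an isomorphism $F\colon\Spin(M,g)^1\times_{\mathbb{Z}_2}P\to\Spin(M,g)^2$ of $\Spin(m)$-principal bundles with $\eta^2\circ F=\tilde\eta$, which unpacks to exactly~\eqref{eq 1}. The step I expect to demand the most care is the case analysis on loop closure: one has to synchronise the additive difference $\delta$, the $\{\pm 1\}$ bookkeeping for whether a lift closes, and the diagonal $\mathbb{Z}_2$-equivalence $(a,e)\sim(-a,-e)$, so that the final identity really reads $\tilde\Omega=\Omega^2$ and not something off by a sign.
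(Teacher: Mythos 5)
Your proof is correct, and it takes a genuinely different route from the paper's. The paper constructs $F$ explicitly by path-lifting: choose a basepoint, lift a path from the basepoint to the target point in $M$ to $P$ and to both spin structures, and set $F([a,b]):=\gamma_2^{\tilde\omega}(1)\cdot A\cdot B$; the bulk of the paper's argument (its Lemma~A.1) is a delicate case analysis showing that this is independent of the chosen paths and lifts. You instead work at the level of the classification of spin structures: you exhibit $\Spin(M,g)^1\times_{\mathbb{Z}_2}P$ directly as a metric spin structure via the projection $[a,e]\mapsto\eta^1(a)$, compute its image under $\Psi$ by the loop-closure bookkeeping, observe that it agrees with $\Psi(\eta^2)$, and invoke injectivity of $\Psi$ (the standard fact that the associated subgroup of $\pi_1(\SO(M,g))$ determines the spin structure up to equivalence) to obtain $F$ with $\eta^2\circ F=\tilde\eta$, which is \eqref{eq 1}. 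Your case analysis is exactly right: with additive $\mathbb{Z}_2$-conventions one gets $\tilde\Omega=\Omega^1+\bar\delta\circ\pi_*=\Omega^1-\delta=\Omega^2$. The trade-off is that the paper's argument is self-contained and produces an explicit formula for $F$, while yours is shorter and more conceptual but silently uses that $\Psi$ is injective --- a true and standard fact, but one the paper never states (it only defines $\Psi$). If you want your proof to be fully rigorous within the paper's framework you should cite or sketch this injectivity; everything else (the well-definedness of $\tilde\eta$, the fiber count showing it is a twofold cover, and the freeness/transitivity of the $\Spin(m)$-action making the source a $\Spin(m)$-principal bundle) is routine and you have flagged the genuinely delicate step correctly.
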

Due to its technical nature, the proof will be done in the appendix.

For the remainder of this section, let us additionaly assume that $M$ is closed, $2$-dimensional, and of positive genus. In the following we want to relate the associated (complex) spinor bundles
\[\Sigma^iM=\Spin(M,g)^i\times_{\rho}\Sigma_2,\]
$i=1,2$, where $\rho\colon\Spin(2)\to\textup{Aut}(\Sigma_2)$ is the complex spinor representation.

\begin{lemma}\label{lemma induced map}There exists a smooth map 
	\[f=f_\delta\colon M\to S^1\]
such that the following diagram commutes
\begin{align*}
\xymatrix{
	& \pi_1(S^1)=\mathbb{Z}\ar[d]^{x\mapsto [x]}\\
	\pi_1(M)\ar[r]^{\overline{\delta}}\ar[ru]^{f_*}&\mathbb{Z}_2  }
\end{align*}
and $f_*\colon\pi_1(M)\to\pi_1(S^1)$ is surjective.
\end{lemma}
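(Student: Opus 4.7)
The plan is to exploit that $S^1$ is a $K(\mathbb{Z},1)$, so that homotopy classes of maps $M\to S^1$ correspond bijectively to $H^1(M;\mathbb{Z})\cong \textup{Hom}(H_1(M;\mathbb{Z}),\mathbb{Z})$ via the induced map on fundamental groups (modulo the Hurewicz identification $\pi_1(M)^{\textup{ab}}\cong H_1(M;\mathbb{Z})$). The problem then reduces to producing a surjective homomorphism $\phi\colon H_1(M;\mathbb{Z})\to \mathbb{Z}$ whose reduction modulo $2$ equals $\overline{\delta}$.

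First I would pass from $\pi_1$ to $H_1$: since $\mathbb{Z}_2$ is abelian, $\overline{\delta}$ descends through abelianization to a surjective homomorphism $H_1(M;\mathbb{Z})\to\mathbb{Z}_2$. Because $M$ is a closed orientable (orientability being forced by the spin assumption in dimension two) surface of positive genus $g$, the group $H_1(M;\mathbb{Z})$ is free abelian of rank $2g\ge 2$. This freeness is the key structural input for the next step.

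Next I would construct the lift $\phi$ by hand. Choose a basis $e_1,\ldots,e_{2g}$ of $H_1(M;\mathbb{Z})$ and write the abelianized $\overline{\delta}(e_i)=[a_i]_{\mathbb{Z}_2}$ with $a_i\in\{0,1\}$. Setting $\phi(e_i):=a_i\in \mathbb{Z}$, surjectivity of $\overline{\delta}$ forces at least one $a_j=1$, so the image of $\phi$ contains $1$ and hence $\phi$ is surjective onto $\mathbb{Z}$; by construction $\phi$ reduces modulo $2$ to the abelianized $\overline{\delta}$. The cohomology class corresponding to $\phi$ under $H^1(M;\mathbb{Z})\cong\textup{Hom}(H_1(M;\mathbb{Z}),\mathbb{Z})$ is represented by a continuous map $M\to S^1$, and Whitney's smooth approximation theorem yields a smooth representative $f=f_\delta$ in the same homotopy class. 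By construction $f_*$ agrees with the composition of abelianization with $\phi$, so $f_*$ is surjective and the triangle in the statement commutes.

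I expect the only real subtlety to lie in arranging that the lift is \emph{surjective}, as opposed to merely integer-valued. A general Bockstein-style lifting argument, using that $H^2(M;\mathbb{Z})=\mathbb{Z}$ is torsion-free so that every mod-$2$ class lifts to an integral class, would only guarantee a lift whose image is an odd-index subgroup $k\mathbb{Z}\subseteq\mathbb{Z}$; the ambiguity in such a lift is by elements of $2H^1(M;\mathbb{Z})$, and one still has to adjust by an even class to achieve surjectivity. The explicit $0/1$ construction above sidesteps this by manifestly placing $1$ in the image.
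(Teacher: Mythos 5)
Your argument is correct and follows essentially the same route as the paper: invoke Hurewicz (equivalently, freeness of $H_1(M;\mathbb{Z})\cong\mathbb{Z}^{2g}$) to lift the abelianized $\overline{\delta}$ to a surjective homomorphism $\pi_1(M)\to\mathbb{Z}$, then realize that homomorphism by a smooth map $M\to S^1$ using that $S^1$ is a $K(\mathbb{Z},1)$. The paper compresses the lifting step into ``it is easy to get a commutative diagram''; your explicit $0/1$-valued lift spells that out cleanly (and your closing remark is right that it also sidesteps the need to divide a generic integral lift by an odd integer to achieve surjectivity).
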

\begin{proof} Since $M$ is a orientable and of genus $g\ge1$ we have $H_1(M;\mathbb{Z}_2)\cong \mathbb{Z}^{2g}$. Combining this with the Hurewicz theorem it is easy to get a commutative diagram
	
\begin{align*}
\xymatrix{
	& \pi_1(S^1)=\mathbb{Z}\ar[d]^{x\mapsto [x]}\\
	\pi_1(M)\ar[r]^{\overline{\delta}}\ar[ru]^{h}&\mathbb{Z}_2  }
\end{align*}	
for a surjective homomorphism $h\colon \pi_1(M)\to \pi_1(S^1)=\mathbb{Z}$. It is a standard result that in this situation there exists a smooth map $f\colon M\to S^1$ such that the induced map $f_*\colon \pi_1(M)\to \pi_1(S^1)$ is given by $f_*=h$. (This follows e.g. from \cite[Proposition 1B.9 on p. 90]{Hatcher}.) We have shown the lemma.
% --------------- Ausführlicher Beweis --------------------- 
% We have a commutative diagram
%		\begin{align*}
%	\xymatrix{
%		\mathbb{Z}^{2g}\ar[rrd]^h\\
%		\frac{\pi_1(M)}{[\pi_1(M),\pi_1(M)]}\ar[u]^I \ar[rrd]^{\hat{\delta}}&&\mathbb{Z}\ar[d]^{\text{pr}} \\
%		\pi_1(M) \ar[u]^{\text{pr}}\ar[rr]^{\overline{\delta}}&&\mathbb{Z}_2  }
%	\end{align*}
%	
%where $\textup{pr}$ denote the respective projections, $\hat{\delta}$ is defined with the aid of the isomorphism $\textup{Hom}(\pi_1(M),\mathbb{Z}_2)\cong\textup{Hom}(\frac{\pi_1(M)}{[\pi_1(M),\pi_1(M)]},\mathbb{Z}_2)$, the isomorphism $I$ is given by
%\[I\colon\frac{\pi_1(M)}{[\pi_1(M),\pi_1(M)]}\cong H_1(M;\mathbb{Z}_2)\cong \mathbb{Z}^{2g} \]
%using the Hurewicz theorem and the fact that $M$ is a closed orientable surface of genus $g\ge1$, and $h$ is defined by
%\begin{align*}
%h\colon \mathbb{Z}^{2g}\to\mathbb{Z},\qquad h(e_i):=\begin{cases}
%-1,&\text{if } (\hat{\delta}\circ I^{-1})(e_i)=-1\\
%1,& \text{if }(\hat{\delta}\circ I^{-1})(e_i)=1
%\end{cases}
%\end{align*}
%where $e_i$ is the $i$-th standard basis vector of $\mathbb{R}^{2g}$.	
%
%There exists a smooth map $f\colon M\to S^1$ such that the induced map $f_*\colon \pi_1(M)\to \pi_1(S^1)$ is given by $f_*=h\circ I\circ \textup{pr}$. (This follows e.g. from \cite[Proposition 1B.9 on p. 90]{Hatcher}.) We have shown the lemma.	
\end{proof}

\begin{lemma}\label{lemma ident bdls}Let $E\to S^1$ be a Möbius bundle (i.e., $E\to S^1$ is a non-trivial real vector bundle of rank $1$). Then there exists an isomorphism of complex vector bundles
\begin{align*}
	Q\colon\Sigma^1M\otimes_\mathbb{R}f^*E\to \Sigma^2M
\end{align*}
where $f\colon M\to S^1$ is the map from Lemma \ref{lemma descend}, such that
\begin{align}\label{eq 2}
	Q\circ \D^{f^*E}_{\eta^1}=\D_{\eta^2}\circ Q.
\end{align}
Here, $\D_{\eta^2}$ is the usual Dirac operator on the bundle $\Sigma^2M$ (with respect to the spin structure $\eta^2$) and $\D^{f^*E}_{\eta^1}$ is the Dirac operator of the twisted Dirac bundle $\Sigma^1M\otimes_\mathbb{R}f^*E$.
\end{lemma}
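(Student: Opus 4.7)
The plan is to realise $\Sigma^2M$ as the tensor product $\Sigma^1M\otimes_\mathbb{R}f^*E$ using Lemma \ref{lemma identi spinstr}, and then to check that this identification intertwines the two Dirac operators.

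The first step would be to identify $f^*E$ with the real line bundle $P\times_{\mathbb{Z}_2}\mathbb{R}$. The Möbius bundle is canonically the associated bundle $E=\tilde S^1\times_{\mathbb{Z}_2}\mathbb{R}$ of the non-trivial connected double cover $\tilde S^1\to S^1$, and it carries a natural flat connection coming from the discreteness of that cover. By the commutative diagram in Lemma \ref{lemma induced map}, the monodromy of $f^*\tilde S^1\to M$ lands in $\ker(\overline\delta)=p_*(\pi_1(P))$, so $f^*\tilde S^1\cong P$ as $\mathbb{Z}_2$-principal bundles over $M$. Hence $f^*E\cong P\times_{\mathbb{Z}_2}\mathbb{R}$, and the pulled-back flat connection on $f^*E$ corresponds to the flat connection on $P\times_{\mathbb{Z}_2}\mathbb{R}$ induced by the discreteness of $P$.

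Using Lemma \ref{lemma identi spinstr} together with the fact that the non-trivial element of $\mathbb{Z}_2=\ker(\Spin(2)\to\SO(2))$ acts as $-1$ on both $\Sigma_2$ (via $\rho$) and $\mathbb{R}$, I would then define $Q$ as the composition
\begin{align*}
\Sigma^1M\otimes_\mathbb{R}f^*E
&\cong\bigl(\Spin(M,g)^1\times_\rho\Sigma_2\bigr)\otimes_\mathbb{R}\bigl(P\times_{\mathbb{Z}_2}\mathbb{R}\bigr)\\
&\cong\bigl(\Spin(M,g)^1\times_{\mathbb{Z}_2}P\bigr)\times_\rho\Sigma_2\cong\Spin(M,g)^2\times_\rho\Sigma_2=\Sigma^2M.
\end{align*}
The second isomorphism, explicitly given by $[a,\sigma]\otimes[e,1]\mapsto[a,e,\sigma]$, is well defined by a direct $\mathbb{Z}_2$-equivariance check; the third is induced by the isomorphism $F$ of Lemma \ref{lemma identi spinstr}.

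For the intertwining identity \eqref{eq 2}, the decisive input is equation \eqref{eq 1}, which says that $F$ covers the identity on $\SO(M,g)$. This implies, by uniqueness of lifts of the Levi-Civita connection to a double cover, that the spin connection on $\Spin(M,g)^2$ corresponds under $F$ to the product of the spin connection on $\Spin(M,g)^1$ with the (flat) discrete-cover connection on $P$. Passing to associated bundles, the spinor connection on $\Sigma^2M$ becomes, under $Q$, the tensor product of the spinor connection on $\Sigma^1M$ with the flat connection on $f^*E$; Clifford multiplication by $TM$ is automatically preserved, since both spin structures lie above the same orthonormal frame bundle. Inserting these compatibilities into the local formula for the twisted Dirac operator given in Section 2 (specialised to $f^*E$) yields \eqref{eq 2}. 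I expect the main obstacle to be the careful bookkeeping of all the $\mathbb{Z}_2$-equivariances in the construction of $Q$ and in the connection identification; a clean way to handle this is to work in local frames in which $P$ is trivialised by a flat section, since then $F$ becomes the identity on $\Spin(M,g)^1$, $f^*E$ trivialises with flat connection, and \eqref{eq 2} reduces to a direct comparison of local Dirac-operator formulas.
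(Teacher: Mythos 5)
Your proof is correct and takes essentially the same route as the paper: the paper identifies $f^*(SE)$ (the unit sphere bundle of $E$, i.e.\ the non-trivial double cover of $S^1$) with $P$ via a covering space computation, builds $Q$ from the same associated-bundle isomorphism $[[a,b],v]\mapsto[a,v]\otimes b$ together with the map $F$ of Lemma \ref{lemma identi spinstr}, and checks the intertwining by noting $Q$ commutes with Clifford multiplication and comparing local connection formulas. Your appeal to uniqueness of lifts of the Levi-Civita connection to double covers, combined with \eqref{eq 1}, is a clean way of making the paper's brief ``it is straightforward to deduce \eqref{eq 2}'' explicit.
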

\begin{proof}Let $e\colon SE\to S^1$ be the unit sphere bundle of $E\to S^1$ (w.r.t. an arbitrary bundle metric on $E$). Then $e\colon SE\to S^1$ is a non-trivial twofold covering. Hence the pullback $f^*e\colon f^*(SE)\to M$ is also a twofold covering.

\textbf{Step 1:} $f^*e\colon f^*(SE)\to M$ is a connected covering with $(f^*e)_*\pi_1(f^*(SE))=\ker(\overline{\delta})$.\footnote{In the notation of the beginning of Section \ref{section relation} this means that $f^*(SE)$ and $P$ are isomorphic as coverings.}\\
\textit{Proof of step 1:} Since $M$ and $S^1$ are connected, the induced map $f_*\colon \pi_0(M)\to\pi_0(S^1)$ is bijective. Moreover, $f_*\colon \pi_1(M)\to\pi_1(S^1)$ is surjective by Lemma \ref{lemma descend}. Then it follows from covering space theory that $f^*e\colon f^*(SE)\to M$ is connected. (See e.g. \cite[Lemma 3.6]{pullback}.) Moreover, it holds that
\begin{align*}
	(f^*e)_*\pi_1(f^*(SE))&=(f_*)^{-1}(e_*\pi_1(SE))\\
	&=(f_*)^{-1}(2\mathbb{Z})\\
	&=\ker{\overline{\delta}},
\end{align*}
where the first equality again follows from covering space theory (see e.g. \cite[Lemma 3.4]{pullback}) and the third equality follows directly from the commutative diagram in Lemma \ref{lemma induced map}.\hfill\checkmark

\textbf{Step 2:} The map
\begin{align*}
	\alpha\colon \left(\Spin(M,g)^1\times_{\mathbb{Z}_2} f^*(SE)\right)\times_\rho\Sigma_2&\to \left(\Spin(M,g)^1\times_\rho\Sigma_2\right)\otimes_{\mathbb{R}}f^*E,\\
	[[a,b],v]&\mapsto [a,v]\otimes b,
\end{align*}
where $a\in\Spin(M,g)^1$, $b\in f^*(SE)$, $v\in\Sigma_2$, is an isomorphism of complex vector bundles.\\
\textit{Proof of step 2:} Note that $\Spin(M,g)^1\times_{\mathbb{Z}_2} f^*(SE)$ is a $(\Spin(2)\times_{\mathbb{Z}_2}\mathbb{Z}_2\cong\Spin(2))$-principal bundle, hence the source of the map $\alpha$ is well-defined. It is not difficult to verify that $\alpha$ is well-defined. The (well-defined) inverse of $\alpha$ is given on elementary tensors by
\begin{align*}
	\alpha^{-1}([a,v]\otimes \tilde{b})=\left[\left[a,\frac{\tilde{b}}{\|\tilde{b}\|}\right],\|\tilde{b}\|v\right]
\end{align*} 
where $a\in\Spin(M,g)^1$, $v\in\Sigma_2$, $\tilde{b}\in f^*E$, $b\neq 0$. We have shown step 2.\hfill\checkmark

Combining both steps with Lemma \ref{lemma identi spinstr} we get
\begin{align*}
	\Sigma^1M\otimes_{\mathbb{R}}f^*E&=\left(\Spin(M,g)^1\times_\rho\Sigma_2\right)\otimes_{\mathbb{R}}f^*E\\
	&\cong\left(\Spin(M,g)^1\times_{\mathbb{Z}_2}f^*(SE)\right)\times_\rho\Sigma_2\\
	&\cong\Spin(M,g)^2\times_\rho \Sigma_2\\
	&=\Sigma^2M,
\end{align*}
i.e., we define $Q:=(F,\textup{id}_{\Sigma_2})\circ\alpha^{-1}$. Using the construction of $\alpha$ and equation \eqref{eq 1} one readily checks that $Q$ commutes with Clifford-multiplications on $\Sigma^1M\otimes_{\mathbb{R}}f^*E$ and $\Sigma^2 M$. Combining this with the local formulas for the covariant derivatives on the spinor bundles $\Sigma^iM$ it is straightforward to deduce \eqref{eq 2}.

\end{proof}

\section{Proof of the main theorems}
Before we come to the proof of the main theorems we need one more lemma.
\begin{lemma}\label{lemma 1}Let $M$ be a $2$-dimensional closed connected spin manifold of positive genus. Then there exist spin structures $\chi^i$ on $M$, $i=1,2$, such that
	\begin{align}\label{eq 4}
		\begin{split}
		\dim_\mathbb{H}\ker(\D_{\chi^1_g})&=1,\\
		\dim_\mathbb{H}\ker(\D_{\chi^2_g})&=0,
		\end{split}
	\end{align}
for generic Riemannian metrics $g$ on $M$.
\end{lemma}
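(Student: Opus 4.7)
The plan is to exhibit one odd and one even spin structure on $M$ and then reduce the generic-dimension statement to the Ammann--Dahl--Humbert genericness result for the untwisted Dirac operator \cite{AmmDahHum}.

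First I would recall that on a closed oriented surface of genus $g\ge 1$ the set of spin structures is a torsor over $H^1(M;\mathbb{Z}_2)\cong\mathbb{Z}_2^{2g}$, and that spin structures split into even and odd ones according to their Arf (or equivalently mod-$2$) invariant. A standard count (see e.g.\ Atiyah's paper on Riemann surfaces and spin structures, or the discussion in \cite{LaMi}) gives $2^{g-1}(2^g+1)$ even spin structures and $2^{g-1}(2^g-1)$ odd ones, both of which are positive when $g\ge 1$. Since under the identification $KO^{-2}(\mathrm{pt})\cong\mathbb{Z}_2$ Hitchin's $\alpha$-index of a $2$-dimensional spin manifold coincides with the parity of $\dim_{\mathbb{H}}\ker\D$, a spin structure is odd exactly when $\alpha\ne 0$. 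I would therefore let $\chi^1$ be an odd spin structure and $\chi^2$ be an even spin structure.

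For these choices the index-theoretic lower bound \eqref{eq1} (applied to the untwisted case, i.e.\ to the constant map $N=\mathrm{pt}$, or directly via Hitchin's formula) reads $\dim_{\mathbb{H}}\ker \D_{\chi^1_g}\ge 1$ and $\dim_{\mathbb{H}}\ker\D_{\chi^2_g}\ge 0$ for every Riemannian metric $g$ on $M$. The content of the lemma is the matching upper bound for generic $g$, and this is exactly what the main theorem of \cite{AmmDahHum} provides: for a closed connected spin manifold the kernel of the (untwisted) Dirac operator is as small as the $KO$-theoretic index permits on a $C^\infty$-dense, $C^1$-open set of metrics. Applying that theorem separately to $(M,\chi^1)$ and $(M,\chi^2)$ yields the two equalities of \eqref{eq 4} on the intersection of the two generic sets, which is still $C^\infty$-dense and $C^1$-open.

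There is essentially no obstacle beyond citing the right result: the only thing to be checked carefully is that the notion of ``generic'' used in \cite{AmmDahHum} matches Remark~\ref{rmk}(i), and that their convention for the dimension of the kernel (taken over $\mathbb{C}$) is correctly translated into the quaternionic count appropriate in dimension $2$, where the spinor representation $\Sigma_2$ is of quaternionic type so that $\dim_\mathbb{C}\ker\D=2\dim_\mathbb{H}\ker\D$. Once this bookkeeping is done the lemma follows immediately.
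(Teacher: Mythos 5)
Your proof takes essentially the same route as the paper: choose one spin structure with $\alpha\neq 0$ and one with $\alpha=0$ (possible on any positive-genus surface), apply the Ammann--Dahl--Humbert genericity theorem to each, and intersect the two generic sets of metrics. Your version supplies a bit more detail on why both parities of spin structure exist (the Arf-invariant count) and on the complex-versus-quaternionic dimension bookkeeping, and it also correctly pairs $\chi^1$ with the odd ($\alpha\neq 0$) structure, whereas the paper's proof appears to swap the labels in a minor typo.
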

\begin{proof}Since $M$ has positive genus, there exist spin structures $\chi^1$ and $\chi^2$ on $M$ such that $\alpha(M,\chi_1)=0$ and $\alpha(M,\chi_2)\neq 0$. (We denote by $\alpha(M,\chi)$ Hitchin's $\alpha$-index.) Now we apply \cite[Theorem 1.1]{AmmDahHum} for $\chi_1$ and $\chi_2$. The lemma follows since the intersection of two open and dense sets is again open and dense.
\end{proof}

\begin{proof}[Proof of Theorem \ref{thm 1}, case $n=2$] Let $h$ be any Riemannian metric on $N$. We choose spin structures $\chi^1,\chi^2$ on $M$ and a $C^\infty$-dense and $C^1$-open set $\mathcal{G}\subset\textup{Riem}(M)$ s.t. \eqref{eq 4} holds for every $g\in\mathcal{G}$. Let $g\in\mathcal{G}$ be arbitrary and let $f=f_\delta\colon M\to S^1$ be the map of Lemma \ref{lemma induced map} where $\delta=\delta_{\chi^1_g,\chi^2_g}$.
	
	Since $N$ is non-orientable, there exists an orientation reversing simple (i.e., without self-intersections) closed geodesic $\gamma\colon S^1\to N$.\footnote{Recall that a \textit{closed curve} is a smooth map $S^1\to N$ and a \textit{closed geodesic} is a closed curve that is also a geodesic.} A proof of this fact can be found in the appendix. \label{page}Viewing $S^1$ as a submanifold of $N$ via $\gamma$, we have that
	\begin{align*}
	\gamma^*TN\cong TS^1 \oplus (TS^1)^\bot
	\end{align*}
	where $TS^1\cong\underline{\mathbb{R}}$ is trivial and $(TS^1)^\bot \to S^1$ is non-trivial (i.e., a Möbius bundle), since $\gamma$ is orientation reversing. Since $\gamma$ is a geodesic, we have
	\begin{align}\label{eq 3}
	\nabla^{\gamma^*TN}\cong \begin{pmatrix}
	\nabla^{TS^1} &0\\
	0 & \nabla^{(TS^1)^\bot}
	\end{pmatrix}
	\end{align}
	under the above isomorphism. (Here, $\nabla^{\gamma^*TN}$ is the pullback of the Levi-Civita connection on $(N,h)$ along $\gamma$, and $\nabla^{TS^1}$ and $\nabla^{(TS^1)^\bot}$ are the projections of $\nabla^{\gamma^*TN}$ on $TS^1$ and $(TS^1)^\bot$, respectively.)  We set $\tilde{f}:=\gamma\circ f\colon M\to N$. Applying Lemma \ref{lemma ident bdls} we find that
	\begin{align*}
	\Sigma^1 M\otimes_\mathbb{R} \tilde{f}^*TN&\cong \Sigma^1M\otimes_{\mathbb{R}}f^*(\gamma^*TN)\\
	&\cong \Sigma^1M\otimes_{\mathbb{R}}f^*(\underline{\mathbb{R}}\oplus (TS^1)^\bot)\\
	&\cong \left(\Sigma^1M\otimes_{\mathbb{R}}\underline{\mathbb{R}}\right)\oplus \left(\Sigma^1 M\otimes_{\mathbb{R}}f^*(TS^1)^\bot\right)\\
	&\cong \Sigma^1M\oplus\Sigma^2M.
	\end{align*}
	Using \eqref{eq 3} it follows that under this isomorphism it holds that
	\begin{align}\label{eq 5}
	\D^{\tilde{f}}_{\chi^1_g,h}\cong \begin{pmatrix}
	\D_{\chi^1_g} &0\\
	0 & \D_{\chi^2_g}
	\end{pmatrix}.
	\end{align}
	In particular,
	\[\ker(\D^{\tilde{f}}_{\chi^1_g,h})\cong\ker(\D_{\chi^1_g})\oplus\ker(\D_{\chi^2_g}).\]
	We conclude by using \eqref{eq 4}.
\end{proof}

\begin{proof}[Proof of Theorem \ref{thm 1}, case $n\ge 2$]We choose spin structures $\chi^1,\chi^2$ on $M$ and $\mathcal{G}\subset\textup{Riem}(M)$ as before. Let $g\in\mathcal{G}$ be arbitrary and let $f=f_\delta\colon M\to S^1$ be the map of Lemma \ref{lemma induced map} where $\delta=\delta_{\chi^1_g,\chi^2_g}$.
	
	Let $h_0$ be a Riemannian metric on $N$ s.t. there exists a simple closed geodesic $\gamma\colon S^1\to N$. \footnote{Given any injective closed immersed curve $\gamma\colon S^1 \to N$, it is not hard to construct a Riemannian metric on $N$ for which $\gamma$ is a simple closed geodesic. One can do this e.g. by using a tubular neighborhood of the image of $\gamma$.} Again, we view $S^1$ as a submanifold of $N$ via $\gamma$.
	
	In the case $n=2$, the key ingredient was the identification \eqref{eq 5}, which followed from \eqref{eq 3}. If the dimension of $N$ is greater than two, it is more complicated to deal with the complement $(TS^1)^\bot\subset TN$ in order to get a suitable higher dimensional analog of $\eqref{eq 3}$. For this reason we will modify the Riemannian metric $h_0$ in a neighborhood of $S^1\subset N$. To that end, let
	\[U_\varepsilon:= \textup{exp}^\bot\{(p,v)\in TN\text{ }|\text{ }p\in S^1,\text{ } v\in(T_pS^1)^\bot, \text{ }\|v\|_{h_0}<\varepsilon\}\]
	be a tubular neighborhood of $S^1$ in $N$, where $\varepsilon>0$ is sufficiently small.
	
	Moreover, let $(\nu_1,\ldots,\nu_{n-1})$ be an orthonormal basis of $(T_{\gamma(0)}S^1)^\bot$ where we think of $S^1$ as $[0,2\pi]$ with $0$ and $2\pi$ identified. We define
	\[\nu_i(t):=P^\gamma_{0,t}\nu_i\]
	where $P^\gamma_{0,t}$ denotes the parallel transport in $(N,h_0)$ along $\gamma|_{[0,t]}$ from $\gamma(0)$ to $\gamma(t)$, $t\in[0,2\pi]$. Since $\gamma$ is a geodesic, $(\nu_1(t),\ldots,\nu_{n-1}(t))$ is an orthonormal basis of $(T_{\gamma(t)}S^1)^\bot$ for all $t\in[0,2\pi]$. In the basis $(\nu_1,\ldots,\nu_{n-1})$ the map
	\[P^\gamma_{0,2\pi}\colon (T_{\gamma(0)}S^1)^\bot\to (T_{\gamma(2\pi)}S^1)^\bot\]
	is given by a matrix $A\in\textup{O}(n-1)$. Then we have a diffeomorphism
	\begin{align*}
	T_A:=^{[0,2\pi]\times B_\varepsilon(0)}/_{(0,x)\sim (2\pi,Ax)}&\to U_\varepsilon,\\
	[(t,\sum_{i=1}^{n-1}x_ie_i)]&\mapsto \textup{exp}(\gamma(t),\sum_{i=1}^{n-1}x_i\nu_i(t)),
	\end{align*}
	where $B_\varepsilon(0)\subset\mathbb{R}^{n-1}$ is the open ball of radius $\varepsilon$ with center $0$ and $(e_1,\ldots,e_{n-1})$ is the standard basis of $\mathbb{R}^{n-1}$.
	
	Note that if $A,B\in \textup{O}(n-1)$ are in the same connected component of $\textup{O}(n-1)$, then $T_A$ and $T_B$ are diffeomorphic. We will use this statement a few times below without further mentioning it.
	
	Let $A\in\textup{O}(n-1)$. If we endow $T_A$ with the quotient metric induced from the product metric on $[0,2\pi]\times B_\varepsilon(0)$, then the parallel transport in $T_A$ along the curve $c(t):=[(t,0)]$ from $c(0)$ to $c(2\pi)$ is given by
	\[P^c_{0,2\pi}\colon T_{c(0)}T_A\to T_{c(2\pi)}T_A,\qquad P^c_{0,2\pi}=\begin{pmatrix}
	1 & 0\\
	0 & A
	\end{pmatrix}.  \]
	(with respect to the splitting $T_{c(0)}T_A=T_{c(0)}S^1\oplus (T_{c(0)}S^1)^\bot$ where similar as before we write $S^1$ for the image of $c$).
	
	Now we distinguish three cases.\\
	\textbf{Case 1:} $n$ is even and $N$ is non-orientable: 
	
	We can choose $\gamma$ to be orientation reversing (c.f. Lemma \ref{lemma app1}). Then $P^\gamma_{0,2\pi}\colon (T_{\gamma(0)}S^1)^\bot\to (T_{\gamma(2\pi)}S^1)^\bot$ is orientation reversing and hence the associated matrix is an element of $\textup{O}(n-1)\setminus \SO(n-1)$. Therefore,
	\begin{align*}
	U_\varepsilon\cong T_{-I_{n-1}}
	\end{align*}
	where
	\begin{align*}
	I_{n-1}=\begin{pmatrix}
	1 & & \\
	& \ddots & \\
	& & 1
	\end{pmatrix}.
	\end{align*}
	From the discussion above we see that we can choose a Riemannian metric on $U_\varepsilon$ such that
	
	\[P^\gamma_{0,2\pi}\colon (T_{\gamma(0)}S^1)^\bot\to (T_{\gamma(2\pi)}S^1)^\bot,\qquad P^\gamma_{0,2\pi}v=-v\]
	is minus the identity. Using a partition of unity we have shown that there exists a Riemannian metric $h$ on $N$ such that $P^\gamma_{0,2\pi}\colon(T_{\gamma(0)}S^1)^\bot\to (T_{\gamma(2\pi)}S^1)^\bot$ is minus the identity. This means in particular that
	\[\gamma^*TN\cong TS^1\oplus (TS^1)^\bot\cong TS^1\oplus E_1\oplus\ldots\oplus E_{n-1}\]
	where each $E_i\to S^1$ is a Möbius bundle. Moreover, under this isomorphism we have
	\begin{align}
	\nabla^{\gamma^*TN}\cong\begin{pmatrix}\nabla^{TS^1} & & &\\
	& \nabla^{E_1} & &\\
	& & \ddots &\\
	& & & \nabla^{E_{n-1}}
	\end{pmatrix}
	\end{align}
	where $\nabla^{\gamma^*TN}$ is the pullback of the Levi-Civita connection on $(N,h)$ along $\gamma$, and $\nabla^{TS^1}$, $\nabla^{E_i}$ are the projections of $\nabla^{\gamma^*TN}$. Setting
	\[\tilde{f}:=\gamma\circ f\]
	and using Lemma \ref{lemma ident bdls} we get 
	\begin{align*}
	\Sigma^1M\otimes_\mathbb{R}\tilde{f}^*TN&\cong \Sigma^1M\otimes_{\mathbb{R}}(f^*(\gamma^*TN))\\
	&\cong \Sigma^1M\otimes_{\mathbb{R}}\left(\underline{\mathbb{R}}\oplus f^*(E_1)\oplus\ldots\oplus f^*(E_{n-1})\right)\\
	&\cong \Sigma^1M\oplus \Sigma^2M\oplus\ldots\oplus\Sigma^2M
	\end{align*}
	and, similar to the proof of the case $n=2$, under this isomorphism we have
	\begin{align}
	\D^{\tilde{f}}_{\chi^1_g,h}\cong\begin{pmatrix}\D_{\chi^1_g} & & &\\
	& \D_{\chi^2_g} & &\\
	& & \ddots &\\
	& & & \D_{\chi^2_g}
	\end{pmatrix}
	\end{align}
	and therefore
	\[\ker(\D^{\tilde{f}}_{\chi^1_g,h})\cong\ker(\D_{\chi^1_g})\oplus\ker(\D_{\chi^2_g})\oplus\ldots\oplus\ker(\D_{\chi^2_g}).\]
	We conclude by using \eqref{eq 4}.\\
	\textbf{Case 2:} $n$ is odd and $N$ is orientable:
	
	Then $\gamma$ is orientation preserving, hence $P^\gamma_{0,2\pi}\colon (T_{\gamma(0)}S^1)^\bot\to (T_{\gamma(2\pi)}S^1)^\bot$ is orientation preserving and the associated matrix is an element of $\SO(n-1)$. We get
	\[U_\varepsilon\cong T_{-I_{n-1}}\]
	since $-I_{n-1}$ is in the same connected component as the associated matrix (because both are orientation preserving). Now we can proceed as in case 1.\\
	\textbf{Case 3:} $n$ is odd and $N$ is non-orientable:
	
	Again we can assume that $\gamma$ is orientation reversing. Then the tubular neighborhood $U_\varepsilon$ is diffeomorphic to $T_A$ for
	\begin{align*}
	A=\begin{pmatrix}-1 & & &\\
	& 1 & &\\
	& & \ddots &\\
	& & & 1
	\end{pmatrix}.
	\end{align*}
	Then we can proceed analogous to the first two cases, but we have to switch the roles of the spin structures $\chi^1$ and $\chi^2$.
\end{proof}

\begin{proof}[Proof of Theorem \ref{thm 2}]
\textit{Proof of i):} Let $\tilde{h}$ be an arbitrary Riemannian metric on $N$. We define $h_t:=th+(1-t)\tilde{h}$. Then $\D^{f}_{\chi_{g},h_t}$ depends analytically on $t$ in the sense of \cite[Section 11]{Maier}. By \cite[Proposition 11.4]{Maier} the set
\[\{t\in[0,1]\mid \text{the kernel of }\D^{f}_{\chi_{g},h_t} \text{ is not minimal}\}\]
is finite. Hence the set of Riemannian metrics $h$ on $N$ such that the kernel of $\D^{f}_{\chi_{g},h}$ is minimal is $C^\infty$-dense in $\textup{Riem}(N)$. Moreover, it is $C^1$-open.\footnote{One way to see this is to use the Min-Max principle to show that the map $\textup{Riem}(N)\to \mathbb{N}$, $h\mapsto\dim_{\mathbb{H}}\ker \D^{f}_{\chi_{g},h}$, is upper semicontinuous where on $\textup{Riem}(N)$ we choose the $C^1$-topology.}

\textit{Proof of ii):} The proof is similar to the proof of i), i.e., we use linear interpolation and \cite[Proposition 11.4]{Maier}. Note however, that if we vary the metric on $M$, then the space on which the Dirac operators are defined, also changes and we cannot apply the proposition directly. To get rid of this, we identify the spinor bundles on $M$ as in \cite[Section 2.2]{Maier}, \cite{BG} and then we are able to apply the proposition (compare also \cite[Proof of Proposition 3.1]{Maier}).

\textit{Proof of iii):} We want to use the same strategy as before. The difficulty this time is to find a (piecewise) real analytic path between two homotopic elements in $C^\infty(M,N)$, since linear interpolation does no longer work. Let $\tilde{f}\in C^\infty(M,N)$ be any map with $d^N(f(x),\tilde{f}(x))<\frac12 \textup{inj}(N)$ for all $x\in M$, where $\textup{inj}(N)$ denotes the injectivity radius of $N$. We define
\[f_t(x):=\textup{exp}_{\tilde{f}(x)}\left(t\textup{exp}_{\tilde{f}(x)}^{-1}f(x)\right),\]
$x\in M$, where $\exp$ denotes the exponential map of $N$.\footnote{Conceptually, we take a chart of the manifold $C^\infty(M,N)$ around $\tilde{f}$, linearly interpolate between $\tilde{f}$ and $f$ in the chart, and then use the inverse of the chart to go back to $C^\infty(M,N)$. The result is the map $f_t$.} Then we claim that for all but finitely many $t\in[0,1]$ it holds that the kernel of $\D^{f_t}_{\chi_g,h}$ is minimal. To see this, we denote by $P^t\colon T_{f(x)}N\to T_{f_t(x)}N$ the parallel transport along unique shortest geodesics of $N$ joining $f(x)$ and $f_t(x)$ and consider
\[D_t:=(P^t)^{-1}\circ\D^{f_t}_{\chi_g,h}\circ P^t.\]
The claim follows since the family of operators $D_t$ depends analytically on $t$. ($P^t$ depends analytically on $t$ because of the analytic dependence of solutions of ordinary differential equations on parameters. $f_t$ depends analytically on $t$, since the Riemannian metric on $N$ is real analytic.)

Now let $\tilde{f}\in[f]$ be homotopic to $f$ and let $H$ be any homotopy between $f$ and $\tilde{f}$. We view $H$ as a path $H\colon [0,1]\to C^\infty(M,N)$ with $H(0)=f$ and $H(1)=\tilde{f}$. We can cover the image of $H$ by finitely many $C^0$-balls $U_i$ of radius less than $\frac12\textup{inj}(N)$, $i=1,\ldots,N$, such that $U_i\cap U_{i+1}\neq \varnothing$ for $i=1,\ldots,N-1$, and $f\in U_1$, $\tilde{f}\in U_{N}$.

We choose $f_1\in U_1\cap U_2$ arbitrarily. From the beginning of the proof of iii), we get that there exists a homotopy $H^1$ between $f$ and $f_1$ such that the kernel of $\D^{H^1_t}_{\chi_g,h}$ is minimal for all but finitely many $t\in[0,1]$. We can assume that the kernel of $\D^{f_1}_{\chi_g,h}$ is minimal. Continuing in that manner, we conclude that there exists $f_{N-1}\in U_{N-1}\cap U_N$ such that the kernel of $\D^{f_{N-1}}_{\chi_g,h}$ is minimal and a homotopy $H^{N-1}$ between $f_{N-1}$ and $\tilde{f}$ such that the kernel of $\D^{H^{N-1}_t}_{\chi_g,h}$ is minimal for all but finitely many $t\in[0,1]$. Hence the set of maps $f\in[\tilde{f}]$ such that the kernel of $\D^{f}_{\chi_g,h}$ is minimal is $C^\infty$-dense in $[f]$. As above, it is also $C^1$-open.
\end{proof}

\newpage
\begin{appendices}
	\section{Proof of Lemma \ref{lemma identi spinstr}}
	Let us choose $x_0\in M$ and $y_0\in p^{-1}(x_0)$. Then we define a mapping
	\begin{align*}
	F\colon \Spin(M,g)^1\times_{\mathbb{Z}_2}P\to \Spin(M,g)^2
	\end{align*}
	as follows. 
	
	Let $a\in\Spin(M,g)^1_x$ and $b\in P_x$ be given.
	\begin{enumerate}
		\item[1)] Choose a path $\omega\colon [0,1]\to M$ s.t. $\omega(0)=x_0$ and $\omega(1)=x$. Moreover, denote by $\gamma^\omega\colon[0,1]\to P$ the lift of $\omega$ with $\gamma^\omega(0)=y_0$.
		\item[2)] Choose a lift $\tilde{\omega}\colon [0,1]\to\SO(M,g)$ of $\omega$. 
		\item[3)] Choose lifts $\gamma_i^{\tilde{\omega}}\colon [0,1]\to\Spin(M,g)^i$ of $\tilde{\omega}$ satisfying
		\[\gamma_1^{\tilde{\omega}}(0)\cong\gamma_2^{\tilde{\omega}}(0)\]
		where we identify $\Spin(M,g)^1_{x_0}\cong\Spin(M,g)^2_{x_0}$ with a fixed isomorphism (we fix the isomorphism for the whole proof).
		\item[4)] Let $A=A^{\tilde{\omega}}\in\Spin(m)$ and $B=B^\omega\in\mathbb{Z}_2$ be the uniquely determined elements of $\Spin(m)$ and $\mathbb{Z}_2$, respectively, s.t.
		\begin{align*}
			\gamma_1^{\tilde{\omega}}(1)\cdot A=a,\\
			\gamma^{\omega}(1)\cdot B=b.
		\end{align*}
	\end{enumerate}
	Then we define
	\begin{align*}
		F([a,b]):=\gamma_2^{\tilde{\omega}}(1)\cdot A\cdot B.
	\end{align*}
	The main task is to show that $F$ is well-defined, i.e., doesn't depend on the choices made in 1)-3). 
	
	One easily verifys that the definition of $F$ is independent of the choice of the $\gamma_i^{\tilde{\omega}}$, since for each $i=1,2$ there exist exactly two such lifts and they differ only by $-1\in\mathbb{Z}_2=\ker(\Spin(m)\to\SO(m))$.

	Therefore, it remains to show that $F$ is independent of the choice of $\omega$ and $\tilde{\omega}$ in 1) and 2). To that end, we will show the following lemma.
	\begin{lemma}\label{lemma1}
		Choose $x\in M$ and let $\omega,\sigma\colon[0,1]\to M$ be paths from $x_0$ to $x$. Moreover, let $\tilde{\omega}, \tilde{\sigma}\colon [0,1]\to\SO(M,g)$ be lifts of $\omega$ and $\sigma$, respectively. Then the following holds:
		\begin{enumerate}
			\item If $\omega\ast\overline{\sigma}\in\ker(\overline{\delta})$, then $\gamma_2^{\tilde{\omega}}(1)\cdot A^{\tilde{\omega}}=\gamma_2^{\tilde{\sigma}}(1)\cdot A^{\tilde{\sigma}}$ and $B^\omega=B^\sigma$.
			\item If $\omega\ast\overline{\sigma}\notin\ker(\overline{\delta})$, then $\gamma_2^{\tilde{\omega}}(1)\cdot A^{\tilde{\omega}}=\gamma_2^{\tilde{\sigma}}(1)\cdot A^{\tilde{\sigma}}\cdot(-1)$ and $B^\omega=B^\sigma\cdot (-1)$.
		\end{enumerate}
	In particular, $F$ is well-defined.
	\end{lemma}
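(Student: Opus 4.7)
The plan is to handle the two claimed dichotomies separately via parallel covering-space arguments, one applied to $p\colon P\to M$ and one to the spin coverings $\eta^i\colon\Spin(M,g)^i\to\SO(M,g)$; combining them then gives the well-definedness of $F([a,b])=\gamma_2^{\tilde\omega}(1)\cdot A^{\tilde\omega}\cdot B^\omega$, because the $\pm 1$ picked up in the two factors always cancel.

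For the statement about $B^\omega$ versus $B^\sigma$, I would lift $\omega\ast\overline{\sigma}$ to $P$ starting at $y_0$ as $\gamma^\omega$ followed by the unique lift of $\overline{\sigma}$ beginning at $\gamma^\omega(1)$. Writing $\gamma^\omega(1)=\epsilon\,\gamma^\sigma(1)$ with $\epsilon\in\{\pm1\}$, equivariance of path-lifting under the deck transformation forces the second lift to equal $\epsilon\cdot\overline{\gamma^\sigma}$, hence to end at $\epsilon\,y_0$. So the full lift is a loop iff $\epsilon=+1$, which by standard covering-space theory is equivalent to $[\omega\ast\overline{\sigma}]\in p_\ast\pi_1(P)=\ker\overline{\delta}$. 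Plugging into $\gamma^\omega(1)\cdot B^\omega=b=\gamma^\sigma(1)\cdot B^\sigma$ gives $B^\omega=\epsilon\,B^\sigma$, which is exactly the claimed dichotomy.

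For the statement about $\gamma_2^{\tilde\omega}(1)\cdot A^{\tilde\omega}$ versus $\gamma_2^{\tilde\sigma}(1)\cdot A^{\tilde\sigma}$, I would manufacture an auxiliary loop
\[L:=\tilde\omega\ast c_1\ast\overline{\tilde\sigma}\ast c_0\]
in $\SO(M,g)$, where $c_0\subset\SO(M,g)_{x_0}$ runs from $\tilde\sigma(0)$ to $\tilde\omega(0)$ and $c_1\subset\SO(M,g)_x$ runs from $\tilde\omega(1)$ to $\tilde\sigma(1)$ (both available since the fibers of $\SO(M,g)\to M$ are connected copies of $\SO(m)$). Since $L$ projects to $\omega\ast\overline{\sigma}$ in $M$, we have $\delta([L])=\overline{\delta}([\omega\ast\overline{\sigma}])$. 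Lifting $L$ piece-by-piece into $\Spin(M,g)^i$ starting at $\gamma_i^{\tilde\omega}(0)$, one finds that the terminal point equals $\epsilon_1^i\epsilon_0^i\cdot\gamma_i^{\tilde\omega}(0)$, where $\epsilon_0^i,\epsilon_1^i\in\{\pm 1\}$ record the discrepancy between the lifts of $c_0,c_1$ and the chosen endpoints of $\gamma_i^{\tilde\sigma}$; hence $L$ lifts to a loop in $\Spin(M,g)^i$ iff $\epsilon_0^i\epsilon_1^i=+1$, so $\delta([L])=\epsilon_0^1\epsilon_1^1\epsilon_0^2\epsilon_1^2$. The crucial additional identity $\epsilon_0^1=\epsilon_0^2$ follows from the fact that the fixed isomorphism $\Spin(M,g)^1_{x_0}\cong\Spin(M,g)^2_{x_0}$ is an isomorphism of principal $\Spin(m)$-bundles over $\SO(M,g)_{x_0}$ and therefore transports the lift of $c_0$ starting at $\gamma_1^{\tilde\sigma}(0)$ onto the lift starting at $\gamma_2^{\tilde\sigma}(0)$. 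Consequently $\epsilon_1^1\epsilon_1^2=\overline{\delta}([\omega\ast\overline{\sigma}])$.

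To close the argument I would compute in principal-bundle coordinates. Parametrize $c_1(t)=\tilde\omega(1)\cdot h(t)$ for $h\colon[0,1]\to\SO(m)$, lift $h$ uniquely to $\tilde h\colon[0,1]\to\Spin(m)$ with $\tilde h(0)=e$, and set $\tilde k:=\tilde h(1)$. The lift of $c_1$ in $\Spin(M,g)^i$ from $\gamma_i^{\tilde\omega}(1)$ is then $\gamma_i^{\tilde\omega}(1)\cdot\tilde h$, giving $\gamma_i^{\tilde\sigma}(1)=\epsilon_1^i\,\gamma_i^{\tilde\omega}(1)\cdot\tilde k$. Applying this for $i=1$ to $a=\gamma_1^{\tilde\omega}(1)\cdot A^{\tilde\omega}=\gamma_1^{\tilde\sigma}(1)\cdot A^{\tilde\sigma}$ yields $A^{\tilde\sigma}=\epsilon_1^1\,\tilde k^{-1}A^{\tilde\omega}$, and substituting everything into $\gamma_2^{\tilde\sigma}(1)\cdot A^{\tilde\sigma}$ together with centrality of $\pm 1$ in $\Spin(m)$ gives
\[\gamma_2^{\tilde\sigma}(1)\cdot A^{\tilde\sigma}=\epsilon_1^1\epsilon_1^2\cdot\gamma_2^{\tilde\omega}(1)\cdot A^{\tilde\omega},\]
completing both cases. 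The main obstacle I anticipate is the sign bookkeeping in the auxiliary-loop argument, in particular checking that $\delta([L])$ and the identity $\epsilon_0^1=\epsilon_0^2$ are genuinely independent of the many auxiliary choices (the paths $c_j$, the starting points of the Spin-lifts); this uses both the compatibility condition $\gamma_1^{\tilde\omega}(0)\cong\gamma_2^{\tilde\omega}(0)$ and the fact, established in the proof of Lemma \ref{lemma descend}, that loops in the fibers of $\SO(M,g)\to M$ lie in $\ker\delta$.
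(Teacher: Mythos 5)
Your argument is correct and reaches the same conclusion as the paper, but via a genuinely different route. The paper reduces to the case $\tilde\omega(0)=\tilde\sigma(0)$, $\tilde\omega(1)=\tilde\sigma(1)$ in three successive steps and then splits into two cases according to whether $\alpha=\tilde\omega\ast\overline{\tilde\sigma}$ lifts to loops in both spin coverings or in neither; you instead build a single auxiliary loop $L=\tilde\omega\ast c_1\ast\overline{\tilde\sigma}\ast c_0$ in $\SO(M,g)$, compute $\delta([L])=\epsilon_0^1\epsilon_1^1\epsilon_0^2\epsilon_1^2$ by piecewise path-lifting, identify $\delta([L])=\overline\delta([\omega\ast\overline\sigma])$ via Lemma~\ref{lemma descend}, and then carry out the entire sign bookkeeping in one pass using $\epsilon_0^1=\epsilon_0^2$ and centrality of $\{\pm1\}$ in $\Spin(m)$. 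Both proofs ultimately rest on the same facts (Lemma~\ref{lemma descend}, the compatibility $\gamma_1^{\tilde\omega}(0)\cong\gamma_2^{\tilde\omega}(0)$, and the fact that the fixed fiber isomorphism $\Spin(M,g)^1_{x_0}\cong\Spin(M,g)^2_{x_0}$ is equivariant over $\SO(M,g)_{x_0}$, which the paper uses implicitly when it lifts $\alpha$ to $\Spin(M,g)^2$ with starting point $\gamma_1^{\tilde\omega}(0)$), but your version avoids the paper's three-step reduction and case distinction at the price of somewhat heavier sign bookkeeping, and has the advantage of producing the unified formula $\gamma_2^{\tilde\sigma}(1)A^{\tilde\sigma}=\overline\delta([\omega\ast\overline\sigma])\cdot\gamma_2^{\tilde\omega}(1)A^{\tilde\omega}$ that yields both parts (i) and (ii) simultaneously. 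One small remark: you do not actually need the observation that loops in the fibers of $\SO(M,g)\to M$ lie in $\ker\delta$ in order to establish $\epsilon_0^1=\epsilon_0^2$; that identity already follows from uniqueness of path lifts once one knows the fixed isomorphism commutes with the $\eta^i$ and with the $\Spin(m)$-action.
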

\begin{proof} Let us prove i) first. Notice that since $\omega\ast\overline{\sigma}\in\ker(\overline{\delta})=p_*(\pi_1(P,y_0))$ (c.f. the beginning of Section \ref{section relation}) we have that $\omega\ast\overline{\sigma}$ can be lifted to a loop in $P$, and we directly get $B^\omega=B^\sigma$. Now we proceed in several steps.\\

\textbf{Step 1:} The assertion of i) holds if $\tilde{\omega}(0)=\tilde{\sigma}(0)$ and $\tilde{\omega}(1)=\tilde{\sigma}(1)$.\\
Since $\omega\ast\overline{\sigma}\in\ker(\overline{\delta})$ it follows from Lemma \ref{lemma descend} that $\alpha:=\tilde{\omega}\ast\overline{\tilde{\sigma}}\in\ker(\delta)$. We further distinguish two cases.\\
\underline{Case 1:} $\alpha$ lifts to a loop in $\textup{Spin}(M,g)^i$, $i=1,2$.\\
In this case we get lifts $\gamma_i^{\tilde{\omega}},\gamma_i^{\tilde{\sigma}}\colon [0,1]\to\Spin(M,g)^i$ of $\tilde{\omega}$ and $\tilde{\sigma}$, respectively, s.t.
\begin{align*}
	\gamma_i^{\tilde{\omega}}(0)&=\gamma_i^{\tilde{\sigma}}(0)\in\Spin(M,g)^i_{x_0},\\
	\gamma_i^{\tilde{\omega}}(1)&=\gamma_i^{\tilde{\sigma}}(1)\in\Spin(M,g)^i_x,
\end{align*}
$i=1,2$ and step 1 directly follows. (Note that we already remarked that the definition of $F$ is independent of the choices in 4).)\\
\underline{Case 2:}  $\alpha$ does not lift to a loop in $\textup{Spin}(M,g)^i$, $i=1,2$.\\
In this case we get lifts $\gamma_1^{\tilde{\omega}},\gamma_1^{\tilde{\sigma}}\colon [0,1]\to\Spin(M,g)^1$ of $\tilde{\omega}$ and $\tilde{\sigma}$, respectively, s.t. $\gamma_1^{\tilde{\omega}}(1)=\gamma_1^{\tilde{\sigma}}(1)$ and $\gamma_1^{\tilde{\omega}}(0)\neq\gamma_1^{\tilde{\sigma}}(0)\in\Spin(M,g)^1_{x_0}$, i.e., 
\begin{align}\label{eq 8}\{\gamma_1^{\tilde{\omega}}(0),\gamma_1^{\tilde{\sigma}}(0)\}=\Spin(M,g)^1_{x_0}.\end{align}
Then we lift $\alpha$ to a path in $\Spin(M,g)^2$ with starting point $\gamma_1^{\tilde{\omega}}(0)$ and this lift gives us a choice for $\gamma_2^{\tilde{\omega}}$ and $\gamma_2^{\tilde{\sigma}}$. We have
\[\gamma_2^{\tilde{\omega}}(0)\neq\gamma_2^{\tilde{\sigma}}(0)\in\Spin(M,g)^1_{x_0}\cong\Spin(M,g)^2_{x_0}\]
and $\gamma_2^{\tilde{\omega}}(0)\cong\gamma_1^{\tilde{\omega}}(0)$. Combining with \eqref{eq 8} we get $\gamma_2^{\tilde{\omega}}(0)\cong\gamma_1^{\tilde{\sigma}}(0)$ and we have shown step 1.\\

\textbf{Step 2:} The assertion of i) holds if $\tilde{\omega}(1)=\tilde{\sigma}(1)$.\\
Let $c\colon[0,1]\to\SO(M,g)_{x_0}$ be a path with $c(0)=\tilde{\omega}(0)$ and $c(1)=\tilde{\sigma}(0)$. Let $\hat{c}$ be the lift of $c$ to $\Spin(M,g)^1$ with $\hat{c}(1)=\gamma_1^{\tilde{\sigma}}(0)$. Note that $\hat{c}$ only takes values in $\Spin(M,g)^1_{x_0}\cong\Spin(M,g)^2_{x_0}$, so we also think of $\hat{c}$ as lift of $c$ to $\Spin(M,g)^2_{x_0}$. Now we can apply the result of step 1 to $\sigma_1:=\sigma\ast x_0$ (where $x_0$ denotes the constant path), $\omega$, $\widetilde{\sigma_1}:=\tilde{\sigma}\ast c$, and $\tilde{\omega}$.\\

\textbf{Step 3:} The assertion of i) holds.\\
Choose $X\in\SO(m)$ such that $\tilde{\sigma}(1)\cdot B=\tilde{\omega}(1)$. We conclude by using step 2.\\

For ii), we first observe the following: if $\omega\ast\overline{\sigma}\notin\ker(\overline{\delta})=p_*(\pi_1(P,y_0))$, then $\omega\ast\overline{\sigma}$ does not lift to a loop in $P$. From this we easily get $\gamma^\omega(1)=\gamma^\sigma(1)\cdot(-1)$ and therefore $B^\omega=B^\sigma \cdot (-1)$. Moreover, $\gamma_2^{\tilde{\omega}}(1)\cdot A^{\tilde{\omega}}=\gamma_2^{\tilde{\sigma}}(1)\cdot A^{\tilde{\sigma}}\cdot(-1)$ can be shown similar to the proof of i) by splitting the proof into the same three steps.
\end{proof}
For the inverse of $F$, we define a mapping
\begin{align*}
	G\colon\Spin(M,g)^2\to\Spin(M,g)^1\times_{\mathbb{Z}_2}P
\end{align*}
by the following. Let $a\in\Spin(M,g)^2_x$, $x\in M$.
\begin{enumerate}
	\item Choose a path $\omega\colon[0,1]\to M$ s.t. $\omega(0)=x_0$ and $\omega(1)=x$. Denote by $\gamma^\omega\colon[0,1]\to P$ the unique lift of $\omega$ to $P$ with $\gamma^\omega(0)=y_0$.
	\item Choose a lift $\tilde{\omega}\colon[0,1]\to\SO(M,g)$ of $\omega$ to $\SO(M,g)$.
	\item For $i=1,2$ choose lifts $\gamma_i^{\tilde{\omega}}\colon [0,1]\to\Spin(M,g)^i$ with 
\[\gamma_1^{\tilde{\omega}}(0)\cong\gamma_2^{\tilde{\omega}}(0).\]
	\item Denote by $A\in\Spin(m)$ the unique element of $\Spin(m)$ s.t.
	\[\gamma_2^{\tilde{\omega}}(1)\cdot A=a.\] 
\end{enumerate}
Then we define
\[G(a):=[\gamma_1^{\tilde{\omega}}(1)\cdot A,\gamma^\omega(1)].\]
Using the same ideas as above one can show that $G$ is well-defined. Directly from the definitions of $F$ and $G$ we get $F\circ G=id$ and $G\circ F=id$.

\section{Existence of orientation reversing simple closed geodesics} 
In the proof of Theorem \ref{thm 1} on page \pageref{page} we used the existence of an orientation reversing simple closed geodesic $\gamma\colon S^1\to N$ where $N$ is a non-orientable closed Riemannian manifold.

Starting with any closed curve $\gamma_0\colon S^1\to N$ it is a standard result that one can find a closed geodesic in the homotopy class of $\gamma_0$. A direct proof can be found e.g. in \cite[Theorem 1.5.1]{RiemGeomandGeomAna} or \cite[2.98 Theorem on p. 94]{GHL} and a proof using the heat flow method is given in \cite[Theorem 1.6.1]{RiemGeomandGeomAna}. This geodesic is orientation reversing provided that $\gamma_0$ is orientation reversing, but not necessarily without self-intersections. 

Moreover, it is well known that if $\pi_1(N)\neq \{1\}$, then there exists a closed geodesic on $N$ which minimizes length in the class of homotopically non-trivial closed curves on $N$ and this geodesic has no self-intersections, see e.g.  \cite[Lemma 1.5. (2) and Exercise 3 on p. 197]{Takashi}. However, this geodesic is not necessarily orientation reversing.

We prove the following lemma.

\begin{lemma}\label{lemma app1}Let $N$ be a closed non-orientable Riemannian manifold (in particular this implies that $\pi_1(N)\neq \{1\}$). Then there exists an orientation reversing simple closed geodesic $\gamma\colon S^1\to N$.
\end{lemma}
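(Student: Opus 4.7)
The plan is to pass to the orientation double cover of $N$ and exploit its structure. Let $\pi\colon \hat N \to N$ be the orientation double cover equipped with the pullback Riemannian metric, and let $\tau\colon \hat N \to \hat N$ denote the non-trivial deck transformation. Then $\hat N$ is a closed connected orientable Riemannian manifold and $\tau$ is a fixed-point-free isometric involution. I would consider the continuous function $f(p) := d(p, \tau(p))$ on $\hat N$. Since $\tau$ has no fixed points and $\hat N$ is compact, $f$ attains a positive minimum $L_0 > 0$ at some $p_0 \in \hat N$. By the Hopf--Rinow theorem there exists a minimizing geodesic $\hat\gamma\colon [0, L_0] \to \hat N$, parametrized by arc length, from $p_0$ to $\tau(p_0)$, and I set $\gamma := \pi \circ \hat\gamma$. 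Since $\pi(p_0) = \pi(\tau(p_0))$, $\gamma$ is a closed curve in $N$ of length $L_0$; and since its lift $\hat\gamma$ to $\hat N$ fails to close up, $\gamma$ is automatically orientation reversing.

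Next I would verify that $\gamma$ is smooth as a closed geodesic by appealing to the first variation of arc length. For any $v \in T_{p_0}\hat N$, choose a curve $c_s$ with $c_0(0)=p_0$, $c_0'(0)=v$, and consider a smooth variation $\hat\gamma_s$ of $\hat\gamma$ with $\hat\gamma_s(0) = c_s(0)$ and $\hat\gamma_s(L_0) = \tau(c_s(0))$. The inequality $L(\hat\gamma_s) \ge f(c_s(0)) \ge L_0 = L(\hat\gamma_0)$ shows that $s = 0$ is a minimum of $s\mapsto L(\hat\gamma_s)$, so the first variation formula applied to the geodesic $\hat\gamma$ reduces to $\langle d\tau(v), \hat\gamma'(L_0)\rangle = \langle v, \hat\gamma'(0)\rangle$ for all $v$. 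Since $\tau$ is an isometry, this forces $\hat\gamma'(L_0) = d\tau(\hat\gamma'(0))$; applying $d\pi$ and using $\pi \circ \tau = \pi$ gives $\gamma'(0) = \gamma'(L_0)$, so $\gamma$ extends to a smooth closed geodesic $\gamma\colon S^1 \to N$.

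The main obstacle is the simplicity of $\gamma$. Suppose $\gamma(t_1) = \gamma(t_2)$ for some $0 \le t_1 < t_2 \le L_0$ with $(t_1, t_2) \ne (0, L_0)$. Since $\pi^{-1}(\gamma(t_1)) = \{\hat\gamma(t_1),\, \tau(\hat\gamma(t_1))\}$, either $\hat\gamma(t_1) = \hat\gamma(t_2)$ or $\hat\gamma(t_2) = \tau(\hat\gamma(t_1))$. In the first case, excising the subarc $\hat\gamma|_{[t_1, t_2]}$ produces a strictly shorter broken curve in $\hat N$ from $p_0$ to $\tau(p_0)$, contradicting the minimality of $\hat\gamma$. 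In the second case, $\hat\gamma|_{[t_1, t_2]}$ is a curve of length $t_2 - t_1 < L_0$ joining $\hat\gamma(t_1)$ to $\tau(\hat\gamma(t_1))$, contradicting the minimality of $L_0 = \min_p f(p)$. Hence $\gamma$ has no self-intersections, completing the proof. The double cover formulation is essential here: it replaces a subtle direct minimization over orientation-reversing free homotopy classes in $N$ (where compactness and smoothness of a minimizer are delicate) by two elementary shortening arguments.
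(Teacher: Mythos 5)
Your argument is correct, and it takes a genuinely different route from the paper's. The paper stays entirely inside $N$: starting from an arbitrary orientation-reversing closed geodesic, it splits the curve at a self-intersection into two geodesic loops, keeps the orientation-reversing one, smooths it, re-minimizes length in its free homotopy class to get a new orientation-reversing closed geodesic, and iterates; termination is forced because every geodesic loop has length at least $2\,\textup{inj}(N)$, so the total length drops by a fixed amount at each step. You instead pass to the orientation double cover $\hat N$ and minimize the displacement function $p\mapsto d(p,\tau(p))$ of the fixed-point-free deck involution $\tau$. The projection $\gamma$ of a minimizing segment $\hat\gamma$ from $p_0$ to $\tau(p_0)$ is then the desired geodesic: it is smooth at the closing point by a first-variation argument with both endpoints moving along $\tau$-orbits (giving $\hat\gamma'(L_0)=d\tau(\hat\gamma'(0))$ and hence $\gamma'(0)=\gamma'(L_0)$), it is orientation reversing because its lift does not close, and it is simple by the two short-cut arguments you give, one contradicting the minimality of $\hat\gamma$ as a segment and the other contradicting the minimality of $L_0=\min f$. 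Your version delivers the geodesic in a single minimization, without iteration, without the injectivity-radius bookkeeping, and without invoking the existence of length-minimizers in free homotopy classes as a black box; the price is the passage to the double cover and a slightly more careful first-variation computation. What the paper's route buys is that it never leaves $N$ and reduces everything to two standard facts about closed geodesics in homotopy classes, which is closer in spirit to the references it cites.
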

\begin{proof} Let $\gamma_0\colon S^1\to N$ be an orientation reversing closed geodesic. If $\gamma_0$ has no self-intersections, we are done. So assume that $\gamma_0$ is not injective. Then we can split $\gamma_0$ into two geodesic loops\footnote{A \textit{geodesic loop} is a geodesic $c\colon[0,1]\to N$ with $c(0)=c(1)$.} $\tilde{\gamma}_0, \hat{\gamma}_0\colon [0,1]\to N$ both based at the same point where $\tilde{\gamma}_0$ is orientation reversing (hence non-trivial). Let $c_0\colon S^1\to N$ be a smooth approximation of $\tilde{\gamma}_0$ homotopic to $\tilde{\gamma}_0$ with
	\[|L(\tilde{\gamma}_0)-L(c_0)|<\varepsilon\]
where $\varepsilon>0$ is small and $L$ denotes the length. Then there exists a closed orientation reversing geodesic $\gamma_1\colon S^1\to N$ in the homotopy class of $c_0$ which also minimizes length in its homotopy class, see e.g. \cite[Lemma 1.5. (1) on p. 197]{Takashi}. If $\gamma_1$ is injective, we are done. If not, we repeat the above process with $\gamma_0$ replaced by $\gamma_1$.

We have to ensure that this process stops after finitely many steps. This follows from the following two observations. Firstly, each $\gamma_k$ has positive length, i.e., $L(\gamma_k)>0$. Secondly, in each step, the length drops a fixed amount. To see the latter, we recall the following: if $N$ is a closed Riemannian manifold and $c$ is an arbitrary geodesic loop in $N$ (the base point is allowed to vary), then the length of $c$ is bounded from below by two times the injectivity radius of $N$,
\begin{align*}
	L(c)\ge 2\textup{inj}(N)=:C.
\end{align*}
Returning to the beginning of the proof, we choose $\varepsilon=\frac{1}{2}C$ to deduce
\begin{align*}
	L(\gamma_0)&=L(\tilde{\gamma}_0)+L(\hat{\gamma}_0)\\
	&\ge L(\tilde{\gamma}_0) + C\\
	&\ge L(c_0)+\frac12 C\\
	&\ge L(\gamma_1) + \frac12 C\\
\end{align*}
and entirely analogous
\[L(\gamma_{k+1})\le L(\gamma_k)-\frac12 C.\]
Hence in each step the length drops by at least $\frac12 C$.

\end{proof}

\end{appendices}

\newpage
\bibliographystyle{abbrv}
\bibliography{references}
\end{document}